\newcommand{\todo}[1]{\vspace{5 mm}\par \noindent
\marginpar{\textsc{ToDo}} \framebox{\begin{minipage}[c]{0.95
\textwidth} \flushleft \tt #1 \end{minipage}}\vspace{5 mm}\par}
\newcommand{\note}[1]{\vspace{5 mm}\par \noindent
\marginpar{\textsc{Note}} \framebox{\begin{minipage}[c]{0.95
\textwidth} \flushleft \tt #1 \end{minipage}}\vspace{5 mm}\par}
\newcommand{\waven}{\lambda}
\newcommand{\R}{\mathbb{R}}
\renewcommand{\L}{\mathbb{L}}
\renewcommand{\S}{\mathbb{S}}
\newcommand{\T}{S^1}
\newcommand{\I}{\mathrm{i}}
\newcommand*{\V}{\mathcal{V}}
\newcommand*{\W}{\mathcal{W}}
\newcommand*{\modi}{{\operatorname{mod}}}
\newcommand*{\lin}{{\operatorname{lin}}}
\newcommand*{\nl}{{\operatorname{nl}}}
\newcommand*{\Span}{\operatorname{span}}
\renewcommand*{\d}{{\mathrm d}}
\newtheorem{theorem}{Theorem}
\newtheorem{corollary}[theorem]{Corollary}
\newtheorem{lemma}[theorem]{Lemma}
\theoremstyle{remark}
\newcommand*{\tnorm}[3][]{\lVert #2 \rVert_{#3}^{#1}}
\newcommand*{\norm}[3][{\vphantom 1}]{\lVert #2 \rVert_{#3}^{#1}}
\title{A new construction of modified equations for variational
integrators}
\author{Marcel Oliver\thanks{Mathematical Institute for
Machine Learning and Data Science, KU Eichst\"att--Ingolstadt, 85049
Ingolstadt, Germany and Constructor University, 28759 Bremen, Germany
(marcel.oliver@ku.de)}
\and Sergiy Vasylkevych\thanks{Institute of
Meteorology, Universität Hamburg, 20144 Hamburg, Germany
(sergiy.vasylkevych@uni-hamburg.de)}}
\date{\today}
\begin{document}
\maketitle
\let\thefootnote\relax\footnotetext{\emph{MSC Classification}: 65P10,
70H03, 35L05}

\begin{center}
\textit{In memory of Claudia Wulff}
\end{center}

\begin{abstract}
The construction of modified equations is an important step in the
backward error analysis of symplectic integrator for Hamiltonian
systems.  In the context of partial differential equations, the
standard construction leads to modified equations with increasingly
high frequencies which increase the regularity requirements on the
analysis.  In this paper, we consider the next order modified
equations for the implicit midpoint rule applied to the semilinear
wave equation to give a proof-of-concept of a new construction which
works directly with the variational principle.  We show that a
carefully chosen change of coordinates yields a modified system which
inherits its analytical properties from the original wave equation.
Our method systematically exploits additional degrees of freedom by
modifying the symplectic structure and the Hamiltonian together.
\end{abstract}

\section{Introduction}

Over the last decades, backward error analysis has emerged as a useful
tool for proving conservation properties of numerical time integrators
for differential equations.  In a nutshell, one constructs a modified
differential equation which is approximated by the numerical method to
some higher order than the original equation and which possesses
analogous conservation laws. Thus, the numerical scheme is nearly
conservative on time scales over which it approximates the solution of
the modified equation.  Some of the strongest and most general results
for ordinary differential equations were obtained by Benettin and
Giorgilli \cite{BenettiinG:1994:HamiltonianIN} and Hairer and Lubich
\cite{HairerL:1997:LifeSB}, using ideas which go back to Neishtadt
\cite{Neishtadt:1984:SeparationMS}, who optimally truncate an
asymptotic series for the modified vector field to prove that a class
of symplectic schemes when applied to Hamiltonian ordinary
differential equations preserve the energy exponentially well over
exponentially long times in the step size.  Similar ideas appear in
\cite{HairerLW:2006:GeometricNI, LeimkuhlerR:2004:SimulatingHD,
Reich:1999:BackwardEA}.

While this construction formally extends to Hamiltonian partial
differential equations, in this case the asymptotic series for the
modified equation will generally contain arbitrary powers of unbounded
operators, thereby breaking the natural ordering of the terms in the
asymptotic series.  In particular, the standard construction in the
context of hyperbolic equations such as the semilinear wave equation,
fails in the practically relevant regime when the scaling of time vs.\
space step is close to the CFL limit.  This problem has been partially
addressed in a number of ways.  Cano \cite{Cano:2006:ConservedQH}
proves an exponential backward error analysis result conditional on a
number of conjectures.  Moore and Reich \cite{MooreR:2003:BackwardEA}
and Islas and Schober \cite{IslasS:2005:BackwardEA} provide a formal
backward error analysis in a multisymplectic setting.  Under strong
regularity assumptions on the true solution,
\cite{OliverW:2012:AstableRK, OliverW:2014:StabilityGT,
WulffO:2016:ExponentiallyAH} show that the occurrence of unbounded
operators in the modified vector field only leads to loss of order in
the exponents of the backward error estimates.  Sometimes,
non-standard modified equations can be helpful, such as in the result
of \cite{OliverWW:2004:ApproximateMC} on the approximate numerical
preservation of the momentum invariant.  Cohen, Hairer, and Lubich
\cite{CohenHL:2008:ConservationEM} obtained results on polynomially
long times for full time-spectral discretization of weakly nonlinear
wave equations using the method of modulated Fourier expansions.
Finally, there is a large body of work on the backward error analysis
of splitting methods applied to partial differential equations
\cite{DujardinF:2007:NormalFL, DebusscheF:2009:ModifiedES,
FaouGP:2010:BirkhoffFD, FaouGP:2010:BirkhoffAS,
GaucklerL:2010:SplittingIN}.  However, the question whether strong
results for generic solutions, i.e.\ solutions which are neither small
nor analytic, can be obtained remains open.

Modified equations are clearly not unique.  Various expressions
appearing beyond the leading order of the asymptotic series can be
consistently replaced by using the modified equation itself.  In
principle, one can use such substitutions to remove the occurrence of
high frequencies at high orders of the modified system.
Unfortunately, this process will generally break the Hamiltonian
structure.

This paper is motivated by the observation that a large class of
symplectic integrators can be derived via a discrete variational
principle.  Elementary examples appear in Wendlandt and Marsden
\cite{WendlandtM:1997:MechanicalID} and Marsden and West
\cite{MarsdenW:2001:DiscreteMV} who, in particular, show that the
implicit midpoint rule arises via a simple finite difference
approximation of the action integral.  More generally, a large number
of symplectic schemes arises as variational integrators
\cite{LeokS:2012:GeneralTC, MarsdenW:2001:DiscreteMV}; in particular,
Leok and Zhang \cite{LeokZ:2011:DiscreteHV} show that it is also
possible to obtain variational integrators on the Hamiltonian side,
which extends the concept to systems with degenerate Hamiltonians.
Vermeeren \cite{Vermeeren:2017:ModifiedEV,Vermeeren:2019:ModifiedEV}
modified equations on the Lagrangian side, albeit only for finite
dimensional systems and in the sense of so-called meshed Lagrangians.
Recently, McLachlan and Offen \cite{McLachlanO:2022:BackwardEA}
consider variational backward error analysis for symmetry solutions of
wave equations.  This reduction again specializes the problem to
finite-dimensional modified equations.  A variational treatment of the
general case is, so far, open.

In this paper, we demonstrate that it is possible to construct
modified equations via a classical variational principle.  A naive
variational construction has the drawback that the order of time
derivatives in the modified equations increases with the order to
which the modified equations are constructed, i.e., the phase space
gets increasingly larger.  Since the higher time derivatives appear at
higher orders of the expansion (see e.g. \cite{MooreR:2003:BackwardEA,
McLachlanO:2022:BackwardEA}), such constructions lead to singular
perturbation problems with multiple fast time scales.  Still, the
original slow dynamics lives on a submanifold in this larger phase
space.  Our main point is that we can approximately restrict to this
submanifold by the use of a near-identity change of variables which
moves all fast degrees of freedom beyond the truncation order of the
asymptotic expansion.  We call this approach the \emph{method of
degenerate variational asymptotics}; it is motivated by earlier work
on model reduction for rapidly rotating fluid flow
\cite{Oliver:2006:VariationalAR, OliverV:2011:HamiltonianFM}.

The current work is the first proof-of-concept for this approach.  In
the first part of the paper, we consider a simple, yet nontrivial
special case: the next-order modified equations for the implicit
midpoint scheme applied to the semilinear wave equation.  We find that
the new modified equations do not admit frequencies beyond the scale
already present in the original partial differential equation.  In
particular, unlike the modified equations which arise from the
conventional construction, they have a dispersion relation for linear
waves which has a finite limit as the wave number $k$ tends to
infinity.  This behavior coincides qualitatively with that of the
implicit midpoint rule itself, which also possesses a finite highest
numerical frequency in a time-semidiscrete analysis
\cite{BridgesR:2006:NumericalMH}.  Moreover, we can show that the full
nonlinear modified equations are well-posed---locally in time but for
a time interval which is independent of the time step
parameter---precisely in the energy space which arises naturally from
the modified Hamiltonian.  In the second part of the paper, we show
how to extend the approach to arbitrary high order.  This part of the
paper is formal, but we make sure that all spatial operators appearing
in the final modified equations are bounded, and all such operators,
except those that must limit to the second space derivative as
$h\to0$, are bounded uniformly as a function of $h$.  We start with
looking at the linear wave equation only, where an all-order modified
equation can be found via a generating function approach.  From there,
we move to the nonlinear case where, up to an arbitrary but fixed
order, we use a bilevel iterative concatenation of transformations to
remove all higher-order time derivatives from the Lagrangian.

The paper is structured as follows.  After the introduction of the
semilinear wave equation in Section~\ref{s.wave},
Section~\ref{s.var-int} gives a brief derivation of the implicit
midpoint scheme as a variational integrator.  In
Section~\ref{s.bea-ham}, we recall the standard Hamiltonian
construction of the modified vector field and show that the result is
only useful under restrictive time-step assumptions.
Section~\ref{s.bea-lagrange} explains the naive variational
construction; Section~\ref{s.deg} introduces the method of degenerate
variational asymptotics, which constrains the phase space of the
modified equations to the slow degrees of freedom.  After addressing
the question of consistent initialization of the modified system in
Section~\ref{s.init}, in Section~\ref{s.num} we give a numerical
evidence that the new modified equations indeed perform as claimed.
In Section~\ref{s.wellposed}, we present an analytic framework in
which the nonlinear elliptic operator which arises in the formulation
of the new modified equations is invertible, and we recast them in the
form of a semilinear evolution equation, thereby obtaining a proof of
local well-posedness in the natural energy space.
Section~\ref{s.higher-linear} discusses the all-order modified
equations for the linear wave equations, and Section~\ref{s.higher-nl}
the subsequent extension to the nonlinear case.  Finally,
Section~\ref{s.conclusions} concludes with a brief discussion of the
results.

\section{The semilinear wave equation}
\label{s.wave}

We consider the semilinear wave equation on the circle $\T$,
\begin{equation}
  \label{e.wave}
  \ddot u = \partial_{xx} u + f(u) \,,
\end{equation}
where $u=u(x,t)$ and we write $\dot u \equiv \partial_t u$.  It arises
as the Euler--Lagrange equation with Lagrangian $L \colon Q \times Q
\rightarrow \R$ given by
\begin{equation}
  L(u, \dot u ) = \int_{\T}
    \tfrac12 \dot u^2 - \tfrac12 \, (\partial_xu)^2 + V(u) \, \d x
\end{equation}
with $f = V'$ and $Q$ being a space of sufficiently smooth functions
on $\T$; we also assume that $V$ is smooth.

Writing $p = \dot u$, the semilinear wave equation is Hamiltonian with
conserved energy
\begin{equation}
  H(p,u)
  = \int_{\T} \tfrac1{2} \, p^2
    + \tfrac1{2} \, (\partial_xu)^2 - V(u) \, \d x \,.
\end{equation}
In addition, the Lagrangian is invariant under space translations.
Hence, Noether's theorem implies conservation of momentum
\begin{equation}
  \label{e.momentum}
  J(u,p)= \int_{\T} p \, \partial_xu \, \d x \,.
\end{equation}

It is often convenient to write the semilinear wave equation as a
first order system: setting
\begin{equation}
  U =
  \begin{pmatrix}
    u \\ p
  \end{pmatrix} \,, \qquad
  A =
  \begin{pmatrix}
    0 & 1 \\ \partial_{xx} & 0
  \end{pmatrix} \,, \qquad \text{and} \qquad
  B(U) =
  \begin{pmatrix}
    0 \\ f(u)
  \end{pmatrix} \,,
\end{equation}
equation \eqref{e.wave} takes the form
\begin{equation}
  \dot U = A U + B(U) \,.
  \label{e.wave2}
\end{equation}

\section{Variational integrators}
\label{s.var-int}

Consider uniform grid on time interval $[0,T]$ with mesh size $h=T/n$.
Following \cite{Veselov:1988:IntegrableSD,
WendlandtM:1997:MechanicalID}, we consider the discrete variational
principle associated with the temporal semidiscretization, namely,
find $u_0, \dots, u_{n}$ which are a stationary point of the discrete
action
\begin{equation}
  \label{e.d.action}
  \S  = \sum_{k=0}^{n-1} \L (u_k, u_{k+1}; h) \,,
\end{equation}
subject to variations which leave the temporal endpoints $u_0$ and
$u_n$ fixed.  The discrete variational principle $\delta \S(u_1,
\dots, u_{n-1}) = 0$ yields the discrete Euler--Lagrange equation
\begin{equation}
  \label{e.d.EL}
  D_1 \L (u_k,u_{k+1};h) + D_2 \L (u_{k-1},u_k;h) = 0
\end{equation}
for $k=1, \dots, n-1$.

A symplectic scheme of second order for the semilinear wave equation
is obtained by taking the discrete Lagrangian
\begin{equation}
  \label{e.d.lagr}
  \L(u_{k}, u_{k+1}; h)
  = h \, L \Bigl(
             \frac{u_k+u_{k+1}}{2}, \frac{u_{k+1}- u_k}{h}
           \Bigr) \,.
\end{equation}
Introducing the discrete Legendre transform
\cite[p.~194]{HairerLW:2006:GeometricNI},
\begin{align}
  p_k
  & \equiv - D_1 \L (u_k,u_{k+1}; h)
    \notag \\
  & = \frac{\delta L}{\delta \dot u}
      \biggl( \frac{{u_k}+u_{k+1}}{2}, \frac{u_{k+1}-u_{k}}{h} \biggr)
      - \frac{h}{2} \, \frac{\delta L}{\delta u}
      \biggl( \frac{{u_k}+u_{k+1}}{2}, \frac{u_{k+1}-u_{k}}{h} \biggr) \,,
  \label{e.d.legendre}
\end{align}
and using the discrete Euler--Lagrange equation \eqref{e.d.EL}, we
obtain
\begin{align}
  p_{k+1}
  & = D_2 \L (u_{k},u_{k+1}; h)
    \notag \\
  & = \frac{\delta L}{\delta \dot u}
      \biggl( \frac{{u_k}+u_{k+1}}{2}, \frac{u_{k+1}-u_{k}}{h} \biggr)
      + \frac{h}{2} \, \frac{\delta L}{\delta u}
      \biggl( \frac{{u_k}+u_{k+1}}{2}, \frac{u_{k+1}-u_{k}}{h} \biggr) \,.
  \label{e.p.advance}
\end{align}
The variations on the right of \eqref{e.d.legendre} and
\eqref{e.p.advance} read
\begin{gather}
  \frac{\delta L}{\delta \dot u}(u,\dot u) = \dot u
  \qquad \text{and} \qquad
  \frac{\delta L}{\delta u}(u,\dot u) = \partial_{xx}u + f(u) \,,
  \label{e.L.partial}
\end{gather}
where we identify $Q$ with a subspace of $Q^*$ via the $L^2$ inner
product.  We now introduce an intermediate integration stage
via
\begin{gather}
  u_{k+1/2} = \frac{u_k+u_{k+1}}2
  \qquad \text{and} \qquad
  p_{k+1/2} = \frac{p_k+p_{k+1}}2 \,.
  \label{e.intermediate}
\end{gather}
Then, taking the sum and difference of \eqref{e.d.legendre} and
\eqref{e.p.advance}, respectively, we obtain
\begin{subequations}
  \label{e.im1}
\begin{gather}
  u_{k+1} = u_k + h \, p_{k+1/2} \,, \\
  p_{k+1} = p_k + h \,
            \bigl( \partial_{xx} u_{k+1/2} + f(u_{k+1/2}) \bigr) \,.
\end{gather}
\end{subequations}
Written in this form, the scheme is clearly recognized as the implicit
midpoint rule, which is, in fact, a second order Gauss--Legendre
Runge--Kutta method.  To obtain a practical numerical scheme, it is
better to replace the definition of the intermediate integration stage
by the equivalent expressions \cite{BrennerCT:1982:SingleSM}
\begin{subequations}
  \label{e.intermediate2}
\begin{gather}
  u_{k+1/2} = u_k + \tfrac{h}{2} \, p_{k+1/2} \,, \\
  p_{k+1/2} = p_k + \tfrac{h}{2} \,
              \bigl( \partial_{xx} u_{k+1/2} + f(u_{k+1/2}) \bigr) \,.
\end{gather}
\end{subequations}
In terms of the vector notation introduced at the end of
Section~\ref{s.wave}, \eqref{e.intermediate2} reads
\begin{gather}
  \label{e.intermediate3}
  U_{k+1/2} = U_k + \tfrac{h}2 \,
              \bigl( AU_{k+1/2} + B(U_{k+1/2}) \bigr) \,,
\end{gather}
or
\begin{gather}
  \label{e.intermediate4}
  U_{k+1/2} = (1 - \tfrac{h}2A)^{-1} \bigl(U_k + \tfrac{h}2 \,
              B(U_{k+1/2}) \bigr) \,.
\end{gather}
For sufficiently small $h$ and a suitable choice of function space,
the operator on the right side of \eqref{e.intermediate4} is a
contraction, so that the intermediate stage vector $U_{k+1/2}$ can be
found iteratively.

Similarly, noting that $1+hA \, (1 - \tfrac{h}2A)^{-1} = (1 +
\tfrac{h}2A)(1 - \tfrac{h}2A)^{-1}$, we can write \eqref{e.im1} in the
form
\begin{gather}
  \label{e.im2}
  U_{k+1} = S(hA) U_k + h \, (1 - \tfrac{h}2A)^{-1} \,
              B(U_{k+1/2}) \,,
\end{gather}
where
\begin{gather}
  S(z) = {\left(1+z/2\right)}{\left(1-z/2 \right)^{-1}}
\end{gather}
is known as the \emph{stability function} of the method.  Again,
equation \eqref{e.im2} considered, for instance, in a Sobolev space
$H^s(\T)\times H^{s-1}(\T)$ with $s\geq 1$ has only bounded operators on its right hand side, so the
time-$h$ map of the scheme does not lose derivatives.  For this
reason, the system \eqref{e.intermediate4} and \eqref{e.im2} is the
preferred form for numerical implementation of the implicit midpoint
scheme.

\section{Backward error analysis on the Hamiltonian side}
\label{s.bea-ham}

In this section, we give an elementary derivation of the standard
Hamiltonian modified equation which corresponds to the implicit
midpoint rule up to terms of $O(h^4)$.  The procedure is that of
Hairer, Lubich, and Wanner \cite[Chapter
IX]{HairerLW:2006:GeometricNI}; our presentation reduces the procedure
to the special case at hand.

We use the following notation.  Let $U \colon [0,T] \to Q \times Q^*$
be a curve satisfying an autonomous equation of the form
\begin{equation}
  \dot{U}=F(U)
\end{equation}
with $U(kh) = U_k$ for $kh \in [0,T]$.  Let us fix $k$ and Taylor
expand about $t_0=kh$.  On the one hand,
\begin{equation}
  \label{e.taylor0}
  U_{k+1} = U(t_0+h)
          = U_k + \dot U_k \, h + \tfrac12 \,\ddot U_k \, h^2
            + \tfrac16 \, U_k^{(3)} \, h^3 + O(h^4) \,,
\end{equation}
where $\dot U_k \equiv U(kh)$, $\ddot U_k \equiv \ddot U(kh)$, and
$U_k^{(3)} \equiv U^{(3)}(kh)$.  On the other hand, $U_{k+1}$ is
determined by the implicit midpoint rule
\begin{align}
  U_{k+1}
  & = U_k + h \, F(U_{k+1/2})
      \notag \\
  & = U_k + F(U_k) \, h + \tfrac12 \, F'(U_k) \, \dot U_k \, h^2
      + \tfrac18 \, F''(U_k)(\dot U_k, \dot U_k) \, h^3
      + \tfrac14 \, F'(U_k) \, \ddot U_k \, h^3 + O(h^4) \,.
  \label{e.imid-expanded}
\end{align}

Since the implicit midpoint rule is a symmetric method, only terms at
even powers of $h$ appear in the modified equation
\cite{HairerLW:2006:GeometricNI}.  Therefore, to determine the
modified equation including terms of $O(h^2)$, we seek a vector field
$F_2$ such that
\begin{equation}
  \dot U_k = F(U_k) + F_2(U_k) \, h^2 + O(h^3) \,.
  \label{e.mod-abstract}
\end{equation}
We note that \eqref{e.mod-abstract} implies the approximate identities
\begin{subequations}
  \label{e.mod-abstract-diff}
\begin{gather}
  \ddot U_k = F'(U_k) \, \dot U_k + O(h^2) \,, \\
  U_k^{(3)}
  = F''(U_k) (\dot U_k, \dot U_k) + F'(U_k) \, \ddot U_k + O(h^2) \,.
\end{gather}
\end{subequations}
Then, equating \eqref{e.taylor0} and \eqref{e.imid-expanded}, using
\eqref{e.mod-abstract} and \eqref{e.mod-abstract-diff} to eliminate
all derivatives, we obtain
\begin{gather}
  F_2(U_k) = \tfrac1{12} \, F'(U_k) \, F'(U_k) \, F(U_k)
           - \tfrac1{24} \, F''(U_k) (F(U_k),F(U_k)) \,.
\end{gather}
In the particular case of the semilinear wave equation, where
\begin{equation}
  F(U) = AU + B(U) \,,
\end{equation}
we obtain by direct calculation that
\begin{equation}
  F_2(U) = \frac1{12} \,
  \begin{pmatrix}
    \partial_{xx} p + f'(u) p \\
    (\partial_{xx} + f'(u))(\partial_{xx} u + f(u))
  \end{pmatrix}
  - \frac1{24} \,
  \begin{pmatrix}
  0 \\ f''(u) \, p^2
  \end{pmatrix} \,.
\end{equation}
Hence, the modified equation up to terms of order
$O(h^4)$ reads
\begin{subequations}
  \label{e.PDE.mod1}
\begin{gather}
  \dot u = \bigl[
             1 + \tfrac1{12} \, h^2 \, (\partial_{xx}+f'(u))
           \bigr] \, p \,,
  \label{e.PDE.mod1a} \\
  \dot p = \bigl[
             1 + \tfrac1{12} \, h^2 \, (\partial_{xx} + f'(u))
           \bigr] \, (\partial_{xx} u + f(u))
           - \tfrac1{24} \, h^2 \, f''(u) \, p^2 \,.
\end{gather}
\end{subequations}
It is straightforward to verify that modified equations
\eqref{e.PDE.mod1} define a Hamiltonian system with Hamiltonian
\begin{equation}
  \label{e.H.mod}
  H_\modi (p,u) = H(p,u) + \tfrac1{24} \, h^2 \,
  \int_{\T} f'(u) \, p^2 - (\partial_x p)^2 -
             (\partial_{xx} u +f(u))^2 \, \d x \,.
\end{equation}
It is generally true that symplectic Runge--Kutta schemes applied to
Hamiltonian systems yield a Hamiltonian modified equation at any order
\cite[Section~IX.3]{HairerLW:2006:GeometricNI}.

The difficulty with using the modified system \eqref{e.PDE.mod1} for
the purpose of backward error analysis can be seen as follows.  We
consider, for simplicity, the linear case when $f\equiv 0$.  Writing
$\waven$ to denote the spatial wave number, equation \eqref{e.wave2} in the
space-frequency domain reads
\begin{equation}
  \partial_t \hat U_{\waven}
  = (1 - \tfrac1{12} \, h^2 \, \waven^2) \,
  \begin{pmatrix}
    0 & 1 \\ - {\waven}^2 & 0
  \end{pmatrix} \, \hat U_{\waven} \,.
  \label{e.lin-ham}
\end{equation}
We observe that the additional factor $h^2 \, \waven^2$ will introduce fast
frequencies into the modified dynamics unless we restrict the
admissible wave numbers and time steps.  More generally, it can be
shown that the modified vector field will be an asymptotic series in
$h \waven$, so that the series will be properly ordered only if we restrict
to a finite dimensional subspace of wave numbers---a discretization in
space---and if $h = o(\waven_{\max}^{-1})$.  This excludes the practically
relevant regime of time steps as large as permitted by the CFL
condition where $h \sim \waven_{\max}^{-1}$; see, for example, the
discussion in \cite{CohenHL:2008:ConservationEM}.

\section{Backward error analysis on the Lagrangian side}
\label{s.bea-lagrange}

We now turn to the question of how we can derive modified equations
using the variational principle which underlies the semilinear wave
equation.

The notation here is analogous to the notation used in the previous
section: let $u \colon [0,T] \mapsto Q$ be a curve in $Q$ such that $u(kh) =
u_k$ for $kh \in [0,T]$.  Again, we fix $k$ and Taylor expand  $u$ and
$\dot u$  about $t_0=kh$.  Then, for $\lvert t \rvert \leq h$,
\begin{subequations}
  \label{e.taylor}
\begin{gather}
  \label{e.taylor.u}
  u(kh+t) = u_k + \dot u_k \, t + \tfrac1{2} \, \ddot u_k \, t^2
    + \tfrac1{6} \, u_k^{(3)} \, t^3 + O(h^4) \,, \\
  \dot u (kh+t) = \dot u_k + \ddot u_k \, t
    + \tfrac1{2} \, u_k^{(3)} \, t^2 + O(h^3) \,,
\end{gather}
\end{subequations}
where $\dot u_k \equiv \dot u(kh)$, $\ddot u_k \equiv \ddot u(kh)$, and
$u^{(3)}_k \equiv u^{(3)}(kh)$.  In particular, setting $t=h$, we have
\begin{equation}
  \label{e.uk1}
  u_{k+1}= u_k + \dot u_k \, h + \tfrac1{2} \, \ddot u_k \, h^2
    + \tfrac1{6} \, u_k^{(3)} \, h^3 + O(h^4) \,.
\end{equation}
Substituting \eqref{e.uk1} into the discrete Lagrangian, we obtain
\begin{align}
  \L(u_k, u_{k+1};h)
  & = h \, L \bigl(u_k + \tfrac1{2} \, h \, \dot u_k
      + \tfrac1{4} \, {h^2} \, \ddot u_k + O(h^3),
      \dot u_k + \tfrac1{2} \, h \, \ddot u_k
      + \tfrac1{6} \, {h^2} \, u^{(3)}_k + O(h^3) \bigr)
    \notag \\
  & = h \int_{\T} \tfrac1{2} \,
      \bigl(
        \dot u_k + \tfrac1{2} \, h \, \ddot u_k
        + \tfrac1{6} \, {h^2} \, u^{(3)}_k
      \bigr)^2
       - \tfrac1{2} \,
      \bigl(
        \partial_xu_k + \tfrac1{2} \, h \, \partial_x \dot u_k
        + \tfrac1{4} \, {h^2} \, \partial_x \ddot u_k
      \bigr)^2 \, \d x
    \notag \\
  & \quad + h \int_{\T} V(u_k) + f(u_k) \,
      \bigl(
        \tfrac1{2} \, h \, \dot u_k + \tfrac1{4} \, {h^2} \, \ddot u_k
      \bigr)
      + \tfrac1{8} \, h^2 \, f' (u_k) \, \dot u_k^2 \, \d x + O(h^4)
    \notag \\
  & = h \, L(u_k, \dot u_k)
      + \tfrac1{2} \, h^2 \int_{\T} \ddot u_k \, \dot u_k -
        \partial_x u_k \, \partial_x \dot u_k + f(u_k) \, \dot u_k \, \d x
    \notag \\
  & \quad + h^3 \int_{\T}
      \biggl[
        \tfrac1{8} \, \ddot u_k^2
        + \tfrac1{6} \, \dot u_k \, u^{(3)}_k
        - \tfrac1{8} \, (\partial_x \dot u_k)^2
        - \tfrac1{4} \, \partial_x u_k \, \partial_x \ddot u_k
    \notag \\
  & \qquad \vphantom\int
        + \tfrac1{4} \, f(u_k) \, \ddot u_k
        + \tfrac1{8} \, f' (u_k) \, \dot u_k^2
      \biggr] \, \d x + O(h^4) \,.
  \label{e.d.lagr.approx}
\end{align}

On the other hand, we may substitute the expansions \eqref{e.taylor}
into the continuum action functional and collect terms containing identical
powers of $h$.  We find that
\begin{align}
  S & = \int_0^T L(u,\dot u) \, \d t
        \notag \\
    & = \sum_{k=0}^{n-1} \int_0^h \int_{\T}
        \biggl[
          \tfrac1{2} \,
          (\dot u_k + \ddot u_k \, t + \tfrac1{2} \, u_k^{(3)} \, t^2)^2
          - \tfrac1{2} \,
            (\partial_x u_k + \partial_x \dot u_k \, t
            + \tfrac1{2} \, \partial_x \ddot u_k \, {t^2})^2
        \notag \\
    & \qquad
          + V(u_k) + f(u_k) \, \dot u_k \, t
          + \tfrac12 \, f'(u_k) \, \dot u_k^2 \, t^2
          + \tfrac12 \, f(u_k) \, \ddot u_k \, t^2
        \biggr] \, \d x \, \d t
        + O(h^3)
        \notag \\
    & = \sum_{k=0}^{n-1}
        \biggl[
          h \, L(u_k, \dot u_k)
          + \frac{h^2}{2} \int_{\T}
            \dot u_k \, \ddot u_k -
            \partial_x u_k \, \partial_x \dot u_k +
            f(u_k) \, \dot u_k \, \d x
        \notag \\
    & \qquad + \frac{h^3}{6} \int_{\T}
            \ddot u_k^2 + \dot u_k \, u_k^{(3)} -
            (\partial_x \dot u_k)^2 -
            \partial_x u_k \, \partial_x \ddot u_k +
            f'(u_k) \, \dot u_k^2 + f(u_k) \, \ddot u_k \, \d x
        \biggr]
        + O(h^3) \,.
  \label{e.action.approx}
\end{align}
Inserting the expanded discrete Lagrangian \eqref{e.d.lagr.approx}
into the discrete action \eqref{e.d.action} and comparing with
\eqref{e.action.approx}, we find that
\begin{align}
  \S & = S - \frac{h^3}{24} \sum_{k=0}^{n-1} \int_{\T}
             \ddot u_k^2 - (\partial_x \dot u_k)^2
             + 2 \, \partial_x u_k \, \partial_x \ddot u_k
             + f'(u_k) \, \dot u_k^2
             - 2 \, f(u_k) \, \ddot u_k \, \d x + O(h^3)
       \notag \\
     & = S - \frac{h^2}{24} \int_0^T \int_{\T}
             \ddot u^2 - (\partial_x \dot u)^2
             + 2 \, \partial_x u \, \partial_x \ddot u
             + f'(u) \, \dot u^2
             - 2 \, f(u) \, \ddot u \, \d x \, \d t + O(h^3) \,.
\end{align}
Integrating by parts with respect to time, we obtain---up to boundary
terms which do not contribute to the variational principle because $u$
and $\dot u$ are held fixed at the temporal endpoints---that
\begin{equation}
  \S = \int_0^T
  \biggl[
    L(u,\dot u) - h^2 \int_{\T}
    \tfrac{1}{24} \, \ddot u^2 - \tfrac18 \, (\partial_x \dot u)^2
    + \tfrac18 \, f'(u) \, \dot u^2 \, \d x
  \biggr] \, \d t + O(h^3) \,.
  \label{e.comparison}
\end{equation}
Thus, the discrete variational principle with Lagrangian $\L$ is
equivalent---up to terms of $O(h^3)$---to the continuous variational
principle for the modified Lagrangian
\begin{equation}
  \label{e.mod-lagr}
  L_\modi (u, \dot u, \ddot u; h)
  = L(u, \dot u ) - h^2 \int_{\T}
    \tfrac{1}{24} \, \ddot u^2 - \tfrac18 \, (\partial_x \dot u)^2
    + \tfrac18 \, f'(u) \, \dot u^2 \, \d x \,.
\end{equation}

We now seek stationary points of the modified action with respect to
variation where $u$ and $\dot u$ are held fixed at $t=0$ and $t=T$.
The resulting Euler--Lagrange equations read, abstractly,
\begin{equation}
  \label{e.EL.uddot}
  \frac{\d^2}{\d t^2} \frac{\delta L_\modi}{\delta \ddot u}
  - \frac{\d}{\d t} \frac{\delta L_\modi}{\delta \dot u}
  + \frac{\delta L_\modi}{\delta u} = 0 \,.
\end{equation}
This expression evaluates to
\begin{equation}
  \ddot u - \partial_{xx} u - f(u)
  + h^2 \, \bigl[
             \tfrac1{12} \, u^{(4)}
             - \tfrac14 \, \partial_{xx} \ddot u
             - \tfrac18 \, f''(u) \, \dot u^2
             - \tfrac14 \, f'(u) \, \ddot u
           \bigr]
  = 0 \,.
\end{equation}
We note that this equation is of higher order with respect to time
compared to the original semilinear wave equation, i.e., it has the
form of a singular perturbation problem in a bigger phase space.  As
this is undesirable, we seek to restrict the modified dynamics to the
phase space of the original equation.

In this simple example, this could be done \emph{ad hoc} by using the
second time derivative of the semilinear wave equation to eliminate
the fourth time derivative from the modified equation---an
approximation which gives a formally correct result.  In this case,
the resulting equation would only contain frequencies on the order of
those already present in the original problem and, although
non-variational, approximately preserve energy over long times.  In
the next section, we show that this can also be addressed by a
systematic variational construction.

\section{Method of degenerate variational asymptotics}
\label{s.deg}

We introduce a near-identity configuration space transformation of the
general form
\begin{equation}
  \label{e.transform}
  u_h = u + h^2 \, w \,,
\end{equation}
where $u_h$ denotes the solution curve in old physical configuration
space coordinates and $u$ denotes the solution in a new coordinate
system in which the modified equation will be computed.  The field $w$
can be seen as the leading order generator of the transformation.  It
will be chosen \emph{a posteriori} in such a way that the transformed
modified Lagrangian \eqref{e.mod-lagr}, when truncated to $O(h^2)$,
will not depend on time derivatives of order two and higher.  We
compute
\begin{align}
  L_\modi (u, \dot u, \ddot u; h)
  & = \int_{\T} \tfrac1{2} \,
        (\dot u + h^2 \, \dot w)^2 -
        \tfrac1{2} \, (\partial_x u + h^2\, \partial_x w)^2 +
        V(u + h^2 \, w) \, \d x
      \notag \\
  & \quad - h^2
      \int_{\T} \tfrac{1}{24} \, \ddot u^2 -
        \tfrac18 \, (\partial_x \dot u)^2 +
        \tfrac18 \, f'(u) \, \dot u^2 \, \d x + O(h^3)
      \notag \\
  & = L(u,\dot u) - h^2 \int_{\T}
        \ddot u \, w + \partial_x u \, \partial_x w
        - f(u) \, w \, \d x
      + h^2 \, \frac{\d}{\d t} \int_{\T} \dot u \, w \, \d x
      \notag \\
  & \quad - h^2
      \int_{\T} \tfrac{1}{24} \, \ddot u^2 -
        \tfrac18 \, (\partial_x \dot u)^2 +
        \tfrac18 \, f'(u) \, \dot u^2 \, \d x
      + O(h^3) \,.
  \label{e.lagr.trans}
\end{align}
We now observe that choosing $w=- \tfrac1{24} \, \ddot u+G(u)$, where
$G$ is some functional acting on $u(\,\cdot\,,t)$ for $t$ fixed, we
formally eliminate the excess time derivatives from the $O(h^2)$ terms
in \eqref{e.lagr.trans}. The choice of $G$ is, in principle, entirely
free.  It appears sensible, though, to consider only choices that are
dimensionally consistent.  Specializing further, two cases stand out.
The apparently simplest choice is $G=0$, so that
\begin{equation}
  w = - \tfrac1{24} \, \ddot u  \,.
  \label{e.transformation}
\end{equation}
Alternatively, we might take
\begin{equation}
  w = - \tfrac1{24} \, \ddot u + \tfrac1{24} \partial_{xx} u
      + \tfrac1{24} f(u) \,,
  \label{e.transformation1}
\end{equation}
in which case $w$ vanishes formally up to terms of $O(h^2)$.

Inserting \eqref{e.transformation} into \eqref{e.lagr.trans},
integrating by parts, discarding all perfect time derivatives, and
truncating to $O(h^2)$, we obtain the modified Lagrangian
\begin{equation}
  L_\modi (u, \dot u; h)
    = L(u,\dot u) +
      h^2 \int_{\T}
      \biggl[
   \tfrac1{12}  \, (\partial_x \dot u)^2 -
        \tfrac1{12}  \, f'(u) \, \dot u^2
      \biggr] \, \d x \,.
\end{equation}
We compute
\begin{subequations}
  \label{e.La.partial}
\begin{align}
  \frac{\delta L_\modi}{\delta \dot u}
  & = \bigl(
        1 - \tfrac{h^2}{6} \, f'(u) -
        \tfrac{h^2}{6} \, \partial_{xx}
      \bigr) \, \dot u \\
\intertext{and}
  \frac{\delta L_\modi}{\delta u} & =  \partial_{xx} u + f(u)
   - \tfrac{h^2}{12} \,
       f''(u) \, \dot u^2 \,.
\end{align}
\end{subequations}
Hence, the transformed modified Euler--Lagrange equations read
\begin{equation}
  \bigl(
    1 - \tfrac{h^2}{6} \, f'(u) -
    \tfrac{h^2}{6} \, \partial_{xx} \bigr) \, \ddot u
  - \partial_{xx} u - f(u) - \tfrac{h^2}{12} \,
     \, f''(u) \, \dot u^2 = 0 \,.
  \label{e.modified-el}
\end{equation}
The equations of motion conserve the energy
\begin{align}
  H_\modi
  & = \bigg\langle
        \frac{\delta L_\modi}{\delta \dot u} , \dot u
      \bigg\rangle - L_\modi
      \notag \\
  & \begin{aligned}
      = H + \frac{h^2}{12} \int_{\T}
      \biggl[
      &  (\partial_x \dot u)^2
         -  f'(u) \, \dot u^2
      \biggr] \, \d x \,,
    \end{aligned}
  \label{e.hmod}
\end{align}
where $\langle \cdot \, , \cdot \rangle$ denotes the $L^2$-pairing
between $Q$ and $Q^*$.

We remark that the transformation we use as well as the resulting
Hamiltonian are different from classic Hamiltonian normal form theory
(see, e.g., \cite{Arnold:1988:GeometricalMT} for a general exposition
and \cite{CotterR:2006:SemigeostrophicPM} for an application of normal
form theory to a fast-slow system with gyroscopic forcing for which
the method of degenerate variational asymptotics was originally
developed).  To see this, one can write the modified Hamiltonian
\eqref{e.hmod} in canonical variables $u$ and
$p = \delta L/ \delta \dot{u}$, expand in powers of $h$ so that
$H_\modi = H_0 + h^2 \, H_2 + O(h^4)$ and verify that the Poisson
bracket $\{H_0,H_2\}$ does not vanish.  The construction principles
behind the two approaches are also markedly different: the solution of
the modified system explicitly determines our transformation via
\eqref{e.transform} and \eqref{e.transformation}, whereas normal form
transformation is obtained from modified Hamiltonian by solving the
homological equation.

\section{Initialization}
\label{s.init}

A variational derivation of the modified equation will naturally yield
a second order system which may be cast into a system of first order
equations in several equivalent ways. Hence, care must be taken in
matching of the initial and final time data of the Lagrangian modified
equation with that of the implicit midpoint numerical scheme. In other words, a momentum variable $p$ of the first order system equivalent to the modified equation can be chosen in many different ways and the initialization procedure should be consistent with the chosen definition of $p$.

A straightforward Taylor expansion of the discrete Legendre transform
\eqref{e.d.legendre} shows that
\begin{align}
  p_k & = \dot u_k + \tfrac{h}2 \,
          \bigl(
            \ddot u_k - \partial_{xx} u_k - f(u_k)
          \bigr) + h^2 \,
          \bigl(
            \tfrac16 \, u_k^{(3)} - \tfrac14 \, \partial_{xx} \dot u_k
            - \tfrac14 \, f'(u_k) \, \dot u_k
          \bigr) + O(h^3)
          \notag \\
      & = \dot u_k - \tfrac{h^2}{12} \,
          \bigl(
            \partial_{xx} \dot u_k + f'(u_k) \, \dot u_k
          \bigr) + O(h^3) \,.
  \label{e.udot-to-p}
\end{align}
This relation directly implies that
\begin{equation}
  \dot u_k = p_k + \tfrac{h^2}{12} \,
             \bigl(
               \partial_{xx} p_k + f'(u_k) \, p_k
             \bigr) + O(h^3) \,.
  \label{e.p-to-udot}
\end{equation}
Note that this expression coincides with the first equation
\eqref{e.PDE.mod1a} of the Hamiltonian modified system.  It implies
that the initial data for $p$ cannot be identified with the initial
data for $\dot u$ to the order that the modified equation is valid.
Rather, if $p$ denotes the initial data for the implicit midpoint
rule, then the initialization of $\dot u$ for the modified equation
needs to receive data which is related to $p$ via \eqref{e.p-to-udot}.
Vice versa, the final time data for $\dot u$ of the modified equation
needs to be expressed in terms of the implicit midpoint $p$ via
\eqref{e.udot-to-p} before the two can be consistently compared.

We remark that this relation must be used when consistently comparing
any modified equation which is second order in time, variational or
not, to the implicit midpoint rule.  The change of coordinates which
appears in our method of degenerate variational asymptotics appears
additionally when investigating the variational modified equations
numerically, as explained next.

\section{Numerical Experiments}
\label{s.num}

The modified equations derived in
Section~\ref{s.deg} takes form
\begin{equation}
  K(u) \ddot u - \partial_{xx} u - f(u)
  - \tfrac{h^2}{12} \, f''(u) \, \dot u^2 = 0
  \label{e.mod-ab0}
\end{equation}
with
\begin{equation}
  K(u) v = \bigl(
          1 - \tfrac{h^2}6 \, f'(u) -  \tfrac{h^2}6 \, \partial_{xx}
        \bigr) v \,.
  \label{e.K}
\end{equation}
We choose $p=\dot u$ as for the original semilinear wave equation.
(We could equally well choose $p = \delta L_\modi/\delta \dot u = K(u)
\dot u$ in which case the initialization described in
Section~\ref{s.init} has to be consistently adapted.)  Then the
corresponding first order system reads
\begin{equation}
  \dot U \equiv
  \begin{pmatrix}
    \dot u \\ \dot p
  \end{pmatrix}
  =
  \begin{pmatrix}
    p \\
    K^{-1} \bigl(
             \partial_{xx} u + f(u) +
             \tfrac{h^2}{12} \, f''(u) \, p^2
           \bigr)
  \end{pmatrix} \,,
  \label{e.corresponding1st}
\end{equation}
where the solution of operator equation $K(u)v = z$ is computed as the
fixed point of the contraction mapping
\begin{equation}
  v = (1 - \tfrac{h^2}6 \, \partial_{xx})^{-1}
      (z + \tfrac{h^2}6 \, f'(u)\, v) \,.
  \label{e.fixedpoint}
\end{equation}
We note that the linear modified dynamics has Fourier representation
\begin{equation}
  \partial_t \hat U_{\waven}
  =
  \begin{pmatrix}
    0 & 1 \\ \tfrac{-{\waven}^2}{1 + \frac16 \, h^2 \, \waven^2} & 0
  \end{pmatrix} \, \hat U_{\waven}k \,.
  \label{e.lin-var}
\end{equation}
Clearly, the highest frequencies are $O(h^{-1})$ without restrictions
on the spatial wave numbers, which is markedly different from the
classic linear modified equations \eqref{e.lin-ham}.

\begin{figure}[tb]
\centering
\includegraphics{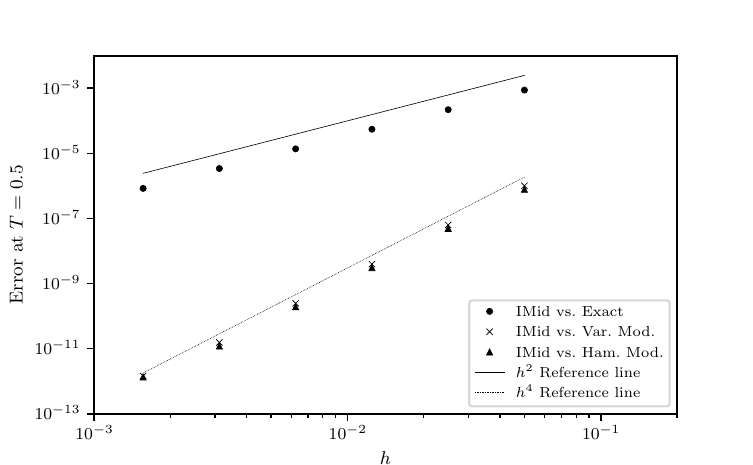}
\caption{Scaling of the error at final time $T=0.5$.}
\label{f.scaling}
\end{figure}

\begin{figure}[tb]
\centering
\includegraphics{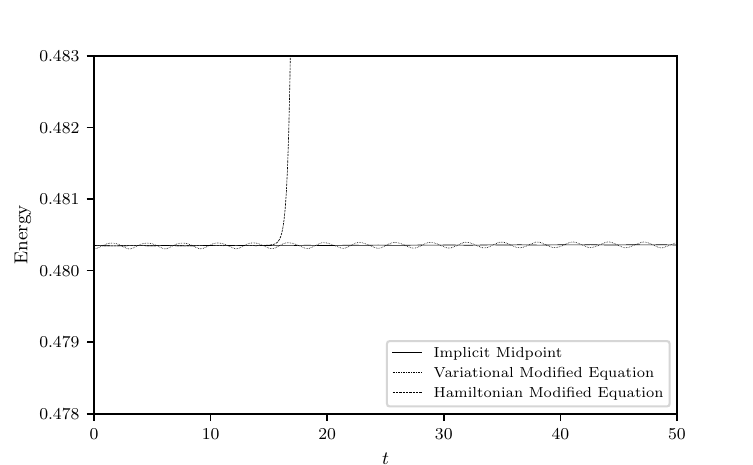}
\caption{Approximate energy preservation of the implicit midpoint rule
and modified equations.  The Hamiltonian modified equation has a
numerical blowup due to a CFL violation while the variational modified
equations is stably solved under identical conditions using a fourth
order explicit Runge--Kutta scheme with a fixed step size $\Delta
t=0.025<h=0.037$.}
\label{f.energy}
\end{figure}

When initializing the modified equation, we must invert the
transformation \eqref{e.transformation}, which now takes the form
\begin{equation}
  U_h = U - \tfrac1{24} \, h^2 \, \ddot U \,.
  \label{e.transformation2}
\end{equation}
This relation can also be cast into fixed point form as follows.
Differentiating \eqref{e.mod-ab0} in time, we find
\begin{equation}
  K u^{(3)} - \partial_{xx} \dot u - f'(u) \, \dot u
  - \tfrac{h^2}{12} \, f'''(u) \, \dot u^3
  - \tfrac{h^2}3 \, f''(u) \, \dot u \, \ddot u = 0 \,.
\end{equation}
Then, for a given vector $U_h$, \eqref{e.transformation2} can be
solved for $U$ via the contraction mapping
\begin{subequations}
\begin{gather}
  U = U_h + \tfrac1{24} \, h^2 \, \ddot U \,, \\
  \ddot U = (1 - \tfrac{h^2}6 \, \partial_{xx})^{-1} G(U, \ddot U) \,,
\end{gather}
\end{subequations}
with
\begin{equation}
  G(U, \ddot U) =
  \begin{pmatrix}
    \frac{h^2}6 \, f'(u) \, \ddot u + \partial_{xx} u + f(u)
      + \tfrac{h^2}{12} \, f''(u) \, p^2 \\
    \frac{h^2}6 \, f'(u) \, \ddot p + \partial_{xx} p
      + f'(u) \, p + \tfrac{h^2}{12} \, f'''(u) \, p^3
      + \tfrac{h^2}3 \, f''(u) \, p \, \ddot u
  \end{pmatrix} \,.
\end{equation}

In our numerical example, we take the nonlinearity $V(u) =
-\tfrac1{10} \, u^4$.  Figure~\ref{f.scaling} shows that both the
Hamiltonian and the variational modified system show the expected
$O(h^4)$ scaling when compared to the solution of the implicit
midpoint rule.  (Due to the symmetry of the method only even powers of
$h$ appear in the modified equations at any order.)  The modified
equations were solved with a highly resolved standard forth order
explicit Runge--Kutta method.

Figure~\ref{f.energy} shows the approximate preservation of energy of
the implicit midpoint rule and of the two modified systems.  The
occurrence of high frequencies in the Hamiltonian construction leads
to numerical blowup, unless the stepsize used in the explicit
Runge--Kutta scheme is adjusted to fit a stricter CFL bound.  The
graphs display the semilinear wave energy; the respective modified
energies would be exactly preserved by the true solutions to the
modified equations.

\section{Well-posedness of the new modified system}
\label{s.wellposed}

In the following we provide a functional framework which shows that,
in the limit of vanishing stepsize $h$, the variational modified
system with the choice of parameters as in Section~\ref{s.num} behaves
analytically like the original semilinear wave equation.  We begin by
proving a statement on the operator $K$ defined in \eqref{e.K}.  For
convenience, we write $K(u) = L+M(u)$ with
\begin{equation}
  Lv = \bigl(
          1  - \tfrac{h^2}6 \, \partial_{xx}
       \bigr) v
\end{equation}
and
\begin{equation}
  M(u)v = - \tfrac{h^2}6 \, f'(u) \, v \,.
\end{equation}

Let $L^2(\T)$ denote the Lebesgue space of square integrable functions
on the circle, which we endow with norm
\begin{equation}
  \norm{v}{L^2} = \sum_{k \in Z} \lvert v_k \rvert^2 \,,
\end{equation}
$L^\infty(\T)$ the space of essentially bounded functions endowed with
the usual essential sup-norm, and $H^s(\T)$ the Sobolev space of
functions whose generalized derivative of order $s$ belongs to
$L^2(\T)$, endowed with norm
\begin{equation}
  \norm{v}{H^s}
  = \sum_{\waven \in Z} (1 + \lvert \waven \rvert^{2s}) \,
    \lvert v_{\waven} \rvert^2 \,.
\end{equation}

\begin{lemma} \label{l.K}
For every $C>0$ there exists $h_*>0$ such that for every $h \in
(0,h_*]$, $u \in H^1(\T)$ with $\tnorm{u}{H^1} \leq C$, and $z \in
L^2(\T)$, the equation $K(u)v=z$ has a unique solution $v \in H^2(\T)$
and there exists a constant $c=c(C)>0$ such that
\begin{equation}
  \label{e.estimate1}
  \norm{v}{L^2} \leq \frac{1}{1 - c \, h^2} \, \norm{z}{L^2}\,,
\end{equation}
and
\begin{equation}
  \label{e.estimate2}
  \norm{v}{H^2} \leq
  6 \, \biggl(
         \frac1{h^2}
         + \frac{c}{1-c \, h^2}
       \biggr) \, \norm{z}{L^2} \,.
\end{equation}
Moreover, for fixed $z \in L^2(\T)$ and under the above bounds on $u$
and $h$, the mapping $u \mapsto K(u)^{-1}z$ is uniformly Lipschitz
continuous as a map from $H^1$ to $H^2$.
\end{lemma}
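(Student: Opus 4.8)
The plan is to treat $K(u) = L + M(u)$ as a perturbation of the constant-coefficient elliptic operator $L$, exploiting the structural fact that the perturbation $M(u)$ carries a prefactor $h^2$ which compensates the two-derivative loss incurred by $L^{-1}$. First I would analyze $L$ in Fourier space, where it acts as multiplication by the symbol $1 + \tfrac{h^2}6 \, \waven^2 \geq 1$. This immediately gives $\norm{L^{-1}}{L^2 \to L^2} \leq 1$. For the smoothing bound, using $(1+\waven^4)^{1/2} \leq 1 + \waven^2$ together with the elementary inequality $h^2 (1+\waven^2) \leq 6 + h^2 \waven^2$, valid whenever $h^2 \leq 6$, yields $\sup_\waven (1+\waven^4)^{1/2} / (1 + \tfrac{h^2}6 \waven^2) \leq 6/h^2$, hence $\norm{L^{-1}}{L^2 \to H^2} \leq 6/h^2$; this is the origin of the prefactor $6$ and the singular $h^{-2}$ in \eqref{e.estimate2}. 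For the perturbation, the one-dimensional Sobolev embedding $H^1(\T) \hookrightarrow L^\infty(\T)$ together with the smoothness of $f' = V''$ bounds $\norm{f'(u)}{L^\infty}$ by a constant $c_0 = c_0(C)$ on the ball $\tnorm{u}{H^1} \leq C$; writing $c = c_0/6$, this gives $\norm{M(u)v}{L^2} \leq c \, h^2 \, \norm{v}{L^2}$, i.e. $\norm{L^{-1} M(u)}{L^2 \to L^2} \leq c \, h^2$.

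Second, for existence, uniqueness, and \eqref{e.estimate1}, I would recast $K(u)v = z$ as the fixed-point problem $v = L^{-1}(z - M(u)v)$, mirroring \eqref{e.fixedpoint}. Choosing $h_*$ small enough that $c \, h_*^2 < 1$ (and $h_* \leq \sqrt 6$), the right-hand side is a contraction on $L^2(\T)$ with Lipschitz constant $\leq c \, h^2$, so the Banach fixed point theorem delivers a unique $v \in L^2$ satisfying $\norm{v}{L^2} \leq (1 - c\,h^2)^{-1} \norm{L^{-1}z}{L^2} \leq (1-c\,h^2)^{-1} \norm{z}{L^2}$, which is \eqref{e.estimate1}. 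Feeding this solution back through $v = L^{-1}(z - M(u)v)$ and using the $L^2 \to H^2$ bound then shows $v \in H^2$ and gives $\norm{v}{H^2} \leq \tfrac6{h^2}(\norm{z}{L^2} + c\,h^2 \norm{v}{L^2}) \leq 6 \, (\tfrac1{h^2} + \tfrac{c}{1-c\,h^2}) \, \norm{z}{L^2}$, which is exactly \eqref{e.estimate2}.

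Third, for the Lipschitz estimate, given $u_1, u_2$ in the ball and $v_i = K(u_i)^{-1}z$, I would subtract $K(u_1)v_1 = z = K(u_2)v_2$ and regroup to obtain $K(u_1)(v_1 - v_2) = (M(u_2) - M(u_1))v_2$, whence $\norm{v_1 - v_2}{H^2} \leq \norm{K(u_1)^{-1}}{L^2 \to H^2} \, \norm{(M(u_2)-M(u_1))v_2}{L^2}$. I would bound the first factor by \eqref{e.estimate2} and the second by $\tfrac{h^2}6 \norm{f'(u_2)-f'(u_1)}{L^\infty} \norm{v_2}{L^2}$, estimating the difference of $f'$ through its local Lipschitz continuity and $H^1 \hookrightarrow L^\infty$, and $\norm{v_2}{L^2}$ via \eqref{e.estimate1}. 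Since $(M(u_2)-M(u_1))v_2$ again carries a factor $h^2$ that cancels the $h^{-2}$ from \eqref{e.estimate2}, the resulting Lipschitz constant stays bounded uniformly in $h \in (0,h_*]$.

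The main obstacle, and indeed the whole point of the computation, is the bookkeeping of powers of $h$. The smoothing operator $L^{-1}$ is singular as $h \to 0$, with norm of order $h^{-2}$ into $H^2$, which in isolation would destroy any uniform statement; the result hinges on the fact that $M(u)$ and the coefficient differences always appear with a compensating factor $h^2$, so that every genuinely $h$-dependent constant either remains bounded or tends to a finite limit. Verifying that these cancellations are \emph{exact} in \eqref{e.estimate2} and in the Lipschitz bound, rather than merely up to harmless factors, is where the care must be taken.
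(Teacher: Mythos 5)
Your proposal is correct and follows essentially the same route as the paper: the fixed-point formulation $v = L^{-1}(z - M(u)v)$, the operator norms $\norm{L^{-1}}{L^2\to L^2}\leq 1$ and $\norm{L^{-1}}{L^2\to H^2}\leq 6/h^2$, the Sobolev embedding $H^1(\T)\hookrightarrow L^\infty(\T)$ to control $f'(u)$, and the cancellation of the $h^{-2}$ smoothing loss against the $h^2$ prefactor of $M$. The only cosmetic difference is in the Lipschitz step, where you apply $K(u_1)^{-1}$ to $(M(u_2)-M(u_1))v_2$ directly while the paper keeps the term $M(u_1)(v_1-v_2)$ on the right and absorbs it; both are equivalent.
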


\begin{proof}
As in \eqref{e.fixedpoint}, we write $Kv=z$ in fixed point form as
\begin{equation}
  v = F(v) \equiv L^{-1}(z-M(u)v) \,.
  \label{e.fixedpoint2}
\end{equation}
Since $L^{-1}$, the inverse Helmholtz operator, has norm $1$ as an
operator from $H^s$ to $H^s$, there exists a constant $c>0$ such that
\begin{equation}
  \norm{F(v_1) - F(v_2)}{L^2}
  \leq c \, \tfrac{h^2}6 \, \norm{f'(u)}{H^1} \, \norm{v_1-v_2}{L^2} \,.
\end{equation}
Hence, there exists $h_*>0$ such that for all $h \in [0,h_*]$ the map
$F$ is a contraction, hence has a unique fixed point $v$ by the
contraction mapping theorem.

Taking the $L^2$-norm of \eqref{e.fixedpoint2}, we obtain
\begin{equation}
  \norm[2]{v}{L^2}
  \leq \norm{z}{L^2}
  + \tfrac{h^2}6 \, \norm{f'(u)}{L^\infty} \, \norm{v}{L^2} \,,
\end{equation}
which implies \eqref{e.estimate1}.  Taking the $H^2$-norm of
\eqref{e.fixedpoint2} and noting that $L^{-1}$ has norm $6/h^2$ as an
operator from $L^2$ into $H^2$, we find
\begin{equation}
  \norm[2]{v}{H^2}
  \leq \tfrac6{h^2} \, \norm{z}{L^2}
       + \norm{f'(u)}{L^\infty} \, \norm{v}{L^2}
\end{equation}
which together with \eqref{e.estimate1} implies \eqref{e.estimate2}.

Now suppose that $K(u_1)v_1=z$ and $K(u_2)v_2=z$.  Then
\begin{equation}
  v_1 - v_2
  = - L^{-1} \bigl(
               M(u_1) \, (v_1-v_2) +
               (M(u_1) - M(u_2)) \, v_2
             \bigr) \,.
  \label{e.difference}
\end{equation}
Taking the $L^2$-norm on both sides, we find that
\begin{equation}
  \norm{v_1-v_2}{L^2}
  \leq \tfrac{h^2}6 \, \norm{f'(u_1)}{L^\infty} \, \norm{v_1-v_2}{L^2}
  + \tfrac{h^2}6 \, \norm{f'(u_1)-f'(u_2)}{L^\infty} \,
    \norm{v_2}{L^2} \,,
\end{equation}
so that, for some $\tilde c = \tilde c(C)$,
\begin{equation}
  \norm{v_1-v_2}{L^2}
  \leq \frac{\tilde c \, h^2}{1-c \, h^2} \,
       \norm{v_2}{L^2} \, \norm{u_1-u_2}{H^1} \,.
\end{equation}
Due to \eqref{e.estimate1}, this estimate implies uniform Lipschitz
continuity of $u \mapsto K(u)^{-1}z$ as a map from $H^1$ into $L^2$.
Then, taking the $H^2$-norm of \eqref{e.difference} and using the
operator norm of $L^{-1}$ as a map from $L^2$ to $H^2$ implies uniform
Lipschitz continuity into $H^2$ as well.
\end{proof}

To proceed, we write $H^1_h(\T)$ to denote the space $H^1(\T)$ endowed
with the nonuniform norm
\begin{equation}
  \norm[2]{v}{H^1_h}
  = \sum_{k \in Z} (1 + \tfrac{h^2}6 \, \lvert k \rvert^{2}) \,
    \lvert v_k \rvert^2 \,.
\end{equation}
Note that the space $H^1 \times H_h^1$ is the ``energy space'' which
corresponds to the modified Hamiltonian \eqref{e.hmod} with $a=b=0$.
We therefore seek local well-posedness in this space.

\begin{theorem}
For every $C>0$ there exists $T=T(C)>0$ and $h_*=h_*(C)>0$ such that
for every $U_0 \in H^1(\T) \times H^1_h(\T)$ with $\tnorm{U_0}{H^1
\times H_h^1} \leq C$ and every $h \in (0,h_*]$ there exists a unique
mild solution $U \in C([0,T]; H^1(\T) \times H^1_h(\T)$ to the new
modified equation \eqref{e.wave2} with bounds which remain uniform in
$h$.
\end{theorem}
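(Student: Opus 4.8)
The plan is to write the first‑order modified system \eqref{e.corresponding1st} as a semilinear evolution equation $\dot U = \mathcal A_0 U + \mathcal N(U)$ driven by a linear group that is bounded \emph{uniformly in $h$}, and then to solve it by a Duhamel fixed‑point argument whose constants are all $h$‑independent. To this end I would split the velocity equation using $K(u)^{-1}\partial_{xx} = L^{-1}\partial_{xx} + (K(u)^{-1}-L^{-1})\partial_{xx}$ and take
\[
  \mathcal A_0 = \begin{pmatrix} 0 & 1 \\ L^{-1}\partial_{xx} & 0 \end{pmatrix},
\]
so that $\mathcal N(U) = \bigl(0,\ (K(u)^{-1}-L^{-1})\partial_{xx}u + K(u)^{-1}(f(u)+\tfrac{h^2}{12}f''(u)p^2)\bigr)$. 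The decisive observation is that, even though the symbol of $L^{-1}\partial_{xx}$ saturates at $6/h^2$, so that $\mathcal A_0$ has norm of order $h^{-1}$ on $H^1\times H^1_h$ and is \emph{not} uniformly bounded, the operator $\mathcal A_0$ is skew‑symmetric with respect to the quadratic energy inner product $\sum_k [\tfrac12(1+\tfrac{h^2}6 k^2)|p_k|^2 + \tfrac12 k^2|u_k|^2]$ coming from \eqref{e.hmod}. Mode by mode this energy is conserved by $e^{t\mathcal A_0}$; its only degenerate direction is the mean $u_0$, which obeys $\dot u_0 = p_0$, $\dot p_0 = 0$ and hence grows at most linearly in $t$. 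Since the energy together with $|u_0|^2$ is equivalent to the $H^1\times H^1_h$ norm with constants independent of $h$, I conclude that $e^{t\mathcal A_0}$ is a group on $H^1\times H^1_h$ bounded by a constant $C(t)$ uniform in $h$.

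Next I would check that $\mathcal N$ is a locally Lipschitz map of $H^1\times H^1_h$ into itself, again uniformly in $h$. Two facts from Lemma~\ref{l.K} and its proof do most of the work: the symbol of $L^{-1}$ downweights precisely the $H^1_h$‑factor, so that $K(u)^{-1}$ is bounded from $L^2$ into $H^1_h$ uniformly in $h$, and $u\mapsto K(u)^{-1}z$ is uniformly Lipschitz from $H^1$. Because $H^1(\T)\hookrightarrow L^\infty(\T)$, the term $f(u)$ lies in $L^2$ and is locally Lipschitz in $u$, so $K(u)^{-1}f(u)$ is controlled in $H^1_h$. For the quadratic term I would use Agmon's inequality $\|p\|_{L^\infty}\lesssim \|p\|_{L^2}^{1/2}\|p\|_{H^1}^{1/2}\lesssim h^{-1/2}\|p\|_{H^1_h}$, whence $\|\tfrac{h^2}{12}f''(u)p^2\|_{L^2}\lesssim h^{3/2}\|p\|_{H^1_h}^2$; the spare power of $h$ absorbs the loss from the weak $p$‑norm. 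Finally, the resolvent identity $K(u)^{-1}-L^{-1}=\tfrac{h^2}6 K(u)^{-1}f'(u)L^{-1}$ carries an explicit $h^2$ that cancels the $h^{-2}$ size of $L^{-1}\partial_{xx}$, so $(K(u)^{-1}-L^{-1})\partial_{xx}u$ is uniformly bounded and Lipschitz as well. Collecting these estimates gives $\|\mathcal N(U)\|_{H^1\times H^1_h}\leq C_1(R)$ and $\|\mathcal N(U_1)-\mathcal N(U_2)\|_{H^1\times H^1_h}\leq C_2(R)\|U_1-U_2\|_{H^1\times H^1_h}$ on every ball of radius $R$, with $C_1(R)$ and $C_2(R)$ independent of $h$.

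With the uniform group bound and the uniform Lipschitz nonlinearity in hand, I would close the argument by the contraction mapping theorem applied to the Duhamel map
\[
  \Phi(U)(t) = e^{t\mathcal A_0}U_0 + \int_0^t e^{(t-s)\mathcal A_0}\,\mathcal N(U(s))\,\d s
\]
on a ball in $C([0,T];H^1\times H^1_h)$: fixing $T\leq 1$, setting $R = 2\,C(1)\,C$, and then shrinking $T$ so that $T\,C(T)\,C_2(R) < \tfrac12$ and $T\,C_1(R)\le C/2$ makes $\Phi$ a contraction of the ball into itself, producing a unique local mild solution. Since every constant entering this choice is independent of $h$, both the existence time $T(C)$ and the resulting a priori bound on $\|U\|_{C([0,T];H^1\times H^1_h)}$ are uniform in $h$, as claimed. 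The main obstacle is the uniform semigroup bound of the first step: the generator's norm grows like $h^{-1}$, so perturbing off the free wave group fails and one genuinely must exploit the energy/skew‑symmetry structure to control $e^{t\mathcal A_0}$ uniformly; once the correct $h$‑weighted norms are in place, the compensating powers of $h$ in the nonlinear estimates of the second step are routine.
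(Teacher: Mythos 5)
Your proposal is correct and follows essentially the same route as the paper: the identical splitting $A=\bigl(\begin{smallmatrix}0&1\\L^{-1}\partial_{xx}&0\end{smallmatrix}\bigr)$ plus a nonlinearity built from the resolvent identity $K^{-1}-L^{-1}=-K^{-1}ML^{-1}$, the observation that this linear part generates a unitary (hence $h$-uniformly bounded) group with respect to the $H^1\times H^1_h$ energy, the same compensating powers of $h$ (the $h^2$ in $M$ against the $h^{-2}$ of $L^{-1}\partial_{xx}$, and the $h^2$ prefactor against the weak control of $\|p\|_{L^\infty}$ in the $H^1_h$ norm) drawn from Lemma~\ref{l.K}, and a standard Duhamel contraction. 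Your explicit treatment of the mean modes (linear growth of $u_0$) is a slightly more careful version of the paper's passage to the quotient space $H^1\times H^1_h/\R^2$, but it is not a different argument.
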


\begin{proof}
We first observe, as can be checked by direct computation, that
\begin{equation}
  K^{-1} - L^{-1} = -K^{-1} M L^{-1} \,,
\end{equation}
so that the new modified system \eqref{e.corresponding1st} can be
written as a semilinear evolution equation of the form \eqref{e.wave2}
with
\begin{equation}
  A =
  \begin{pmatrix}
    0 & 1 \\ L^{-1} \partial_{xx} & 0
  \end{pmatrix}
\end{equation}
and
\begin{equation}
  B(U) \equiv
  \begin{pmatrix}
  0 \\ b(U)
  \end{pmatrix}
  =
  \begin{pmatrix}
    0 \\ K^{-1}
    \bigl(
      -ML^{-1} \partial_{xx} u + f(u)
      + \tfrac{h^2}{12} \, f''(u) \, p^2
    \bigr)
  \end{pmatrix} \,.
\end{equation}

Let $H^1 \times H_h^1/\R^2$ denotes the space of $H^1 \times H_h^1$ functions with vanishing mean endowed with the norm
\begin{equation}
  \norm[2]{U}{H^1 \times H_h^1/\R^2}
  = \norm[2]{\partial_x u}{L^2} + \norm[2]{p}{H_h^1} \,.
\end{equation}

The crucial observation is that $A$ generates a unitary group $\exp(tA)$ on $H^1 \times H_h^1/\R^2$. Moreover,
\begin{align}
  b(U_1) - b(U_2)
  & = \bigl(
        K(u_1)^{-1} - K(u_2)^{-1}
      \bigr) \,
      \bigl(
        -M(u_1) \, L^{-1} \partial_{xx} u_1 + f(u_1)
        + \tfrac{h^2}{12} \, f''(u_1) \, p_1^2
      \bigr)
    \notag \\
  & \quad
    - K(u_2)^{-1} \,
      \bigl(
        M(u_1) - M(u_2)
      \bigr) \, L^{-1} \partial_{xx} u_1
    \notag \\
  & \quad
    + K(u_2)^{-1} \,
      \bigl[
        -M(u_2) \, L^{-1} \partial_{xx} (u_1 - u_2) + f(u_1) - f(u_2)
    \notag \\
  & \qquad
        + \tfrac{h^2}{12} \,
          \bigl(
            (f''(u_1) - f''(u_2)) \, p_1^2
            + f''(u_2) \, (p_1 + p_2) \, (p_1 - p_2)
          \bigr)
      \bigr] \,.
  \label{e.bdifference}
\end{align}
Taking the $L^2$-norm of this expression, using the uniform Lipschitz
continuity of $K^{-1}$ and estimate \eqref{e.estimate1} as asserted by
Lemma~\ref{l.K}, and supposing that $U$ has an $H^1 \times H_h^1$
bound of $2C$, say, we find that there exists $h_*>0$ such that for
all $h \in (0,h_*]$,
\begin{align}
  \norm{b(U_1) & - b(U_2)}{L^2}
  \leq c_1 \, \norm{u_1-u_2}{H^1} \notag \\
  & \qquad \qquad \cdot
         \bigl(
           \norm{M(u_1) \, L^{-1} \partial_{xx} u_1}{L^2} +
           \norm{f(u_1)}{L^2} +
           h^2 \, \norm{f''(u_1) \, p_1^2}{L^2}
         \bigr) \notag \\
  & \quad
    + c_2 \, h^2 \, \norm{f'(u_1) - f'(u_2)}{L^\infty} \,
                    \norm{L^{-1} \partial_{xx} u_1}{L^2}
    \notag \\
  & \quad
    + c_3 \,
         \bigl[
           \norm{M(u_2) \, L^{-1} \partial_{xx} (u_1-u_2)}{L^2} +
           \norm{f(u_1) - f(u_2)}{L^2} +
           h^2 \,
           \bigl(
             \norm{p_1}{L^\infty} + \norm{p_2}{L^\infty}
           \bigr) \notag \\
  & \qquad\qquad \cdot
           \bigl(
              \norm{f''(u_1) - f''(u_2))}{L^\infty} \,
              \norm{p_1}{L^2}
              + \norm{f''(u_2)}{L^\infty} \,
              \norm{p_1-p_2}{L^2}
           \bigr)
         \bigr]
\end{align}
Noting that
\begin{equation}
  h^2 \, \norm{L^{-1} \partial_{xx} v}{L^2}
  \leq c_3 \, \norm{v}{L^2}
\end{equation}
and
\begin{equation}
  \norm{p}{L^\infty} \leq c_4 \, h^{-2} \, \norm{p}{H_h^1}
\end{equation}
uniformly under the assumed bounds on $h$ and $U$, we find that
\begin{equation}
  \norm{b(U_1) - b(U_2)}{L^2}
  \leq c_5 \, \norm{U_1-U_2}{H^1 \times H_h^1} \,.
\end{equation}
A similar argument, now taking the $H^1$-norm of \eqref{e.bdifference}
and using \eqref{e.estimate2} rather than \eqref{e.estimate1} shows
that
\begin{equation}
  \norm{b(U_1) - b(U_2)}{H^1}
  \leq c_6 \, h^{-2} \, \norm{U_1-U_2}{H^1 \times H_h^1} \,.
\end{equation}
This proves uniform Lipschitz continuity of $B$ in $H^1 \times H_h^1$.

Altogether, we find that the above formulation of the new modified
equation fits into the standard framework for mild solutions of
semilinear evolution equations \cite{Henry:1981:GeometricTS,
Pazy:1983:SemigroupsLO}, from which the conclusion ensues.
\end{proof}

This theorem shows that the new modified equation is well posed in a
space which limits to the standard setting $H^1 \times L^2$ for the
original semilinear wave equation as $h \to 0$.  We note that the
theorem and proof easily translates up the scale of standard Sobolev
spaces.

\section{All-order modified equations: the linear case}
\label{s.higher-linear}

Let us now consider how the procedure set up in
Sections~\ref{s.bea-lagrange} and~\ref{s.deg} generalizes to higher
orders.  We first consider the linear case, which can be treated for
all orders at once via a generating function approach.  In
Section~\ref{s.higher-nl} below, we extend this approach to the
nonlinear case using an iterative construction that terminates at
fixed, but arbitrary order.

We say that modified Lagrangians $L_1$ and $L_2$ are equivalent at
order $m$ and write
\begin{equation}
L_1 (u_h, \dot u_h, \dots) \sim L_2(u_h, \dot u_h, \dots )  
\end{equation}
whenever the difference between solutions of Euler-Lagrange equations
for $L_1$ and $L_2$ with their respective arguments held fixed at the
temporal endpoints is $o(h^{m})$. Two formal power series are
equivalent whenever they are equivalent at every finite order $m$.

First, by performing all steps laid out in
Section~\ref{s.bea-lagrange} consistently at any order, we find that the
general modified Lagrangian for the implicit midpoint rule applied to
the linear wave equation reads
\begin{equation}
  \label{e.mod-lagr-general} 
  L_\modi^\lin (u_h, \dot u_h, \dots) \sim
   \sum_{i=0}^\infty (-1)^i \, h^{2i}
    \int_{\T} a_i \, (u_h^{(i+1)})^2
      - b_i \, (\partial_x u_h^{(i)})^2 \, \d x
\end{equation}
where $a_0 = b_0 = 1/2$ and, for $i \geq 1$,
\begin{gather}
  a_i = \frac{1}{(2i+2)!}
  \qquad \text{and} \qquad
  b_i = \frac{1}{4 \, (2i)!} \,.
\end{gather}

Proceeding formally, we integrate by parts, recognize the resulting
power series as cosine series, and finally apply standard
trigonometric identities to obtain
\begin{align}
  L_\modi^\lin
  & \sim \frac{1}{h^2} 
      \int_{\T} u_h \, \biggl[
        u_h - \sum_{i=0}^\infty \frac{h^{2i}}{(2i)!} \, u^{(2i)}
        + \frac{h^2}4 \, \partial_x^2 u_h
        + \frac{h^2}4 \sum_{i=0}^\infty
          \frac{h^{2i}}{(2i)!} \, \partial_x^2 u^{(2i)}
      \biggr] \, \d x
      \notag \\
  & = \frac1{h^2}
      \int_{\T} u_h \, \bigl(
                         1 -\cos 2 {T} - (1+\cos2 {T}) \,{X}^2
                       \bigr) u_h \, \d x
      \notag \\
  & = \frac2{h^2}
      \int_{\T} u_h \, \cos^2 {T} \, (\tan^2 {T} -  {X}^2) u_h \, \d x
  \label{e.lmod-untransformed}
\end{align}
where the linear operators ${T}$ and ${X}$ are given by
\begin{equation}
  T = \frac{-\I h \partial_t}2
  \qquad \text{and} \qquad
  X = \frac{-\I h \partial_x}2 \,.
\end{equation}
and $\I$ stands for the imaginary unit. 
Let us now make the transformation ansatz
\begin{equation}\label{e.transansatz}
  u_h = \phi(T^2, X^2) u \,.
\end{equation}
Plugging this ansatz into \eqref{e.lmod-untransformed} and noting that
$T^2$ and $X^2$ are commuting self-adjoint operators, we have
\begin{equation}
  L_\modi^\lin
   \sim \frac2{h^2} \int_{\T} u \,
      \cos^2 T \, (\tan^2 T - X^2) \, \phi^2(T^2, X^2) u \, \d x \,.
  \label{e.lmod-transformed}
\end{equation}
Thus, a sufficient condition for removing time derivatives of order
larger than $2$ from the modified Lagrangian is
\begin{equation}
  \label{e.transm}
  \cos^2 T \, (\tan^2 T - X^2) \, \phi^2(T^2,X^2)
  = \psi^2(X) \, (T^2 - \theta^2(X)) \,,
\end{equation}
where $\theta$ and $\psi$ are analytic functions.  Solving for $\phi$,
we obtain
\begin{equation}
  \label{e.phi}
  \phi(T^2,X^2)
  = \frac{\psi(X)}{\cos T} \,
    \sqrt{\frac{T^2 - \theta^2(X)}{\tan^2 T - X^2}} \,.
\end{equation}
Now the task is to (i) find a function $\theta(x)$, analytic in a
neighborhood of the origin, such that $\phi(t,x)$ is analytic in a
neighborhood of the origin and (ii) find a function $\psi(x)$ such
that $\phi$ is a small perturbation of the identity in the sense that
$\phi(0,x)=1$.  We show that the unique choice
\begin{equation}
  \theta(x) = \arctan x
  \qquad \text{and} \qquad
  \psi(x) = \frac{x}{\arctan x}
  \label{e.theta-psi}
\end{equation}
fulfills these conditions. The proof is based on the following fact.

\begin{lemma}\label{l.an}
Let $f(x)=\sum_{k=0}^\infty f_k \, x^k$ be analytic in a neighborhood
of origin. Then the function
\begin{equation}
  F(s,x) = x \, \frac{f(s)-f(x)}{s-x}
\end{equation}
is analytic near the origin. Moreover, if $f$ is bounded on
$[0, \infty)$, the coefficients of the Maclaurin expansion of $F$ with
respect to $s$ are bounded functions of $x$.
\end{lemma}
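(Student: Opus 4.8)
The plan is to make everything explicit by expanding the divided difference. Factoring $s^k-x^k=(s-x)\sum_{j=0}^{k-1}s^j x^{k-1-j}$ for $k\ge 1$ (the $k=0$ term drops out) and multiplying through by $x$, I obtain the double series
\[
  F(s,x)=\sum_{k=1}^\infty f_k\sum_{j=0}^{k-1}s^j x^{k-j}.
\]
Once $F$ is written in this form, both assertions can be read off directly, so the first step is to justify this manipulation and then exploit it.

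For analyticity near the origin I would use the Cauchy estimates for $f$: if $R>0$ is a radius of convergence for the Maclaurin series of $f$, then for each $\rho<R$ there is a constant $C_\rho$ with $\abs{f_k}\le C_\rho\,\rho^{-k}$. On the bidisk $\abs{s},\abs{x}\le r<R$ the double series above is dominated termwise by $\sum_{k\ge 1}\abs{f_k}\,k\,r^k$, which converges because differentiating a power series does not change its radius of convergence. Absolute and uniform convergence on each such bidisk shows that $F$ is the sum of a convergent power series in the two variables $(s,x)$, hence analytic in a neighborhood of the origin. This part is entirely routine.

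For the coefficients of the Maclaurin expansion in $s$, I read off from the double series
\[
  c_j(x)=\sum_{k\ge j+1}f_k\,x^{k-j}=x^{-j}\Bigl(f(x)-\sum_{k=0}^{j}f_k\,x^k\Bigr),
\]
i.e. $x^{-j}$ times the tail of the Taylor expansion of $f$ past degree $j$; the same closed form can be verified for each fixed $x>0$ by expanding $1/(s-x)$ geometrically, and it furnishes the natural extension of $c_j$ to all of $[0,\infty)$. The left-hand series shows that $c_j$ is a power series vanishing at $x=0$, hence analytic and continuous near $0$, and for $x>0$ the closed form is continuous as well.

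The only real work is the behavior as $x\to\infty$, and this is where boundedness of $f$ is used. The point worth stressing is that the defining series $\sum_{k\ge j+1}f_k x^{k-j}$ contains arbitrarily high powers of $x$, so boundedness is not at all visible from it; the closed form $x^{-j}\bigl(f(x)-\sum_{k=0}^{j}f_kx^k\bigr)$ is what makes it transparent. Splitting it as $x^{-j}f(x)-\sum_{k=0}^{j}f_k x^{k-j}$, the first term tends to $0$ for $j\ge 1$ since $f$ is bounded and $x^{-j}\to 0$, while the finite sum tends to $f_j$ because every term with $k<j$ dies; hence $c_j(x)\to -f_j$. For $j=0$ one has simply $c_0(x)=f(x)-f_0$, which is bounded because $f$ is. In every case $c_j$ is continuous on $[0,\infty)$ with a finite limit at infinity, therefore bounded, which is the assertion. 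I expect this last rewriting---turning an apparently unbounded tail into a bounded expression via the global bound on $f$---to be the crux of the argument; the rest is bookkeeping.
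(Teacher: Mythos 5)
Your proof is correct and follows essentially the same route as the paper's: expand the divided difference into a double power series, identify the Maclaurin coefficient in $s$ as $x^{-j}$ times the tail of $f$'s expansion, and deduce boundedness from that closed form. If anything you are slightly more careful than the paper, which asserts a limit at infinity even for the $j=0$ coefficient (where only boundedness, not existence of a limit, is guaranteed) and has a sign slip in the limiting value for $j\ge 1$.
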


\begin{proof}
We compute, changing the order of summation in the last step,
\begin{equation}
  F(s,x)
  = x \sum_{k=0}^\infty f_k \, \frac{s^k-x^k}{s-x} 
  = x \sum_{k=0}^\infty f_{k+1} \sum_{j=0}^k s^{k-j} \, x^j  
  = \sum_{k=0}^\infty F_k(x) \, s^k  \,,
\end{equation}
where the Maclaurin coefficients $F_k(x)$ are given by
\begin{equation}
  F_k(x)
  = \sum_{j=1}^{\infty} f_{k+j} \, x^j 
  = \frac{1}{x^{k}} \sum_{j=1}^{\infty} f_{k+j} \, x^{k+j} 
  = \frac{1}{x^{k}} \,
    \biggl( f(x) - \sum_{j=0}^k f_j \, x^j \biggr) \,.
\end{equation}
Hence, $F$ is analytic.  Moreover, if $f$ is bounded on $[0,\infty)$,
\begin{equation}
  \lim\limits_{x \rightarrow 0} F_k (x) = 0 \,, \qquad 
  \lim\limits_{x \rightarrow \infty} F_k (x) = f_k \,,
\end{equation}
so $F_k$ is also bounded on $[0,\infty)$.
\end{proof}

\begin{corollary} \label{c.1}
Suppose that, in addition, $f(0)=0$, $f'(0) \neq 0$, $f$ has an
analytic inverse near the origin, and $f$ is nonzero on $(0,\infty)$.
Then
\begin{equation}
  G(t,x) = \sqrt{\frac{t-f(x)}{f^{-1}(t)-x} \, \frac{x}{f(x)}}
\end{equation}
is analytic near the origin, $G(0,x)=1$, and the coefficients of the
Maclaurin expansion of $G$ with respect to $t$ are bounded functions
of $x$ on $[0,\infty)$.
\end{corollary}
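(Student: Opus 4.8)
The plan is to work with $G^2$ rather than $G$ and to reduce everything to Lemma~\ref{l.an} applied to $f$ itself. Introduce the divided difference $D(s,x) = (f(s)-f(x))/(s-x)$, so that the function $F(s,x) = x\,D(s,x)$ is exactly the one in Lemma~\ref{l.an}. Substituting $s = f^{-1}(t)$ and writing $H(t,x) := G(t,x)^2$, a direct rearrangement gives
\[
  H(t,x)
  = \frac{t-f(x)}{f^{-1}(t)-x} \cdot \frac{x}{f(x)}
  = D(f^{-1}(t),x) \cdot \frac{x}{f(x)}
  = \frac{F(f^{-1}(t),x)}{f(x)} \,.
\]
First I would establish analyticity of $H$ near the origin. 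The divided difference $D$ is analytic (this is the first assertion of Lemma~\ref{l.an}, since $F=xD$), $f^{-1}$ is analytic near $0$ with $f^{-1}(0)=0$ by hypothesis, so $D(f^{-1}(t),x)$ is analytic as a composition; and $x/f(x)$ is analytic near $0$ because $f(x)=f'(0)\,x+\cdots$ with $f'(0)\neq0$. Evaluating at $t=0$ and using $f^{-1}(0)=0$ together with $F(0,x)=f(x)$ (equivalently $D(0,x)=f(x)/x$), I obtain $H(0,x)=f(x)/f(x)=1$. In particular $H(0,0)=1\neq0$, so the principal branch of the square root, which is analytic near $1$, may be composed with $H$ on a neighborhood of the origin, giving that $G=\sqrt{H}$ is analytic there with $G(0,x)=\sqrt{H(0,x)}=1$. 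This settles the first two claims.

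The remaining and most delicate claim is uniform boundedness on $[0,\infty)$ of the Maclaurin coefficients of $G$ in $t$. I would first treat $H$. Expanding $F(s,x)=\sum_j F_j(x)\,s^j$ and $f^{-1}(t)=\sum_n c_n t^n$ with $c_0=0$, the coefficient of $t^k$ in $H$ is the finite combination $H_k(x)=f(x)^{-1}\sum_{j=0}^k F_j(x)\,P_{k,j}$, where the constants $P_{k,j}=[t^k](f^{-1}(t))^j$ carry no $x$-dependence. Thus boundedness of $H_k$ reduces to boundedness of each ratio $F_j(x)/f(x)$ on $[0,\infty)$. This is where the corollary's hypotheses are used: Lemma~\ref{l.an} gives that $F_j$ is bounded with $F_j(x)\to f_j$ as $x\to\infty$, while near $x=0$ one has $F_j(x)=O(x)$ and $f(x)=f'(0)\,x+\cdots$, so $F_j/f$ extends continuously to $x=0$; since $f$ is nonzero on $(0,\infty)$ and bounded (inherited from Lemma~\ref{l.an}), the quotient $F_j/f$ is continuous with finite limits at both endpoints and hence bounded.

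Finally I would transfer boundedness from $H$ to $G$ using the relation $G^2=H$ directly, rather than a binomial series, to avoid convergence bookkeeping. Comparing coefficients of $t^k$ yields $G_0=1$ and the recursion $2\,G_k = H_k - \sum_{j=1}^{k-1} G_j\,G_{k-j}$, so by induction each $G_k$ is a fixed polynomial with constant coefficients in $H_1,\dots,H_k$, and therefore bounded on $[0,\infty)$. The main obstacle is precisely this boundedness step: the factor $x/f(x)$ is itself \emph{unbounded} on $[0,\infty)$ (already in the motivating case $x/f(x)=X^2/\arctan^2 X$), so boundedness of the $t$-coefficients cannot be read off termwise and instead relies essentially on the cancellation encoded in $F_j(x)/f(x)$ and on the asymptotics $F_j(x)\to f_j$ furnished by Lemma~\ref{l.an}.
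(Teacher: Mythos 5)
Your proposal is correct and follows essentially the same route as the paper's own proof: both identify $\frac{t-f(x)}{f^{-1}(t)-x}\,x$ with $F(f^{-1}(t),x)$ so that its $t$-coefficients are finite linear combinations of the $F_j$ from Lemma~\ref{l.an}, and then argue that dividing by $f(x)$ and taking the square root preserves boundedness. You merely make explicit what the paper leaves implicit, namely the cancellation in $F_j(x)/f(x)$ near $x=0$ and the coefficient recursion $2G_k = H_k - \sum_{j=1}^{k-1} G_j G_{k-j}$ for the square root.
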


\begin{proof}
Analyticity is obvious.  To show boundedness of the Maclaurin
coefficients of $G$, note that the Maclaurin coefficients of
\begin{equation} 
  \frac{t-f(x)}{f^{-1}(t)-x} \, x
\end{equation}
are finite linear combinations of the Maclaurin coefficients $F_k$
from Lemma~\ref{l.an}, hence are bounded.  Dividing by $f(x)$ and
taking the root does not change this conclusion.
\end{proof}

Returning back to \eqref{e.theta-psi}, we see that the stated choice
of $\theta$ is necessary to ensure analyticity of $\phi$ in a
neighborhood of the origin, as
\begin{equation}
  \frac{\theta^2(X)-\arctan^2(X)}{\tan^2{T}-X^2}
\end{equation}
has a non-removable singularity on the lines $X=\pm \arctan{T}$ unless
its numerator vanishes.  By Corollary~\ref{c.1}, it is also
sufficient.  The choice of $\psi$ stated in \eqref{e.theta-psi} is
then necessary to ensure that $\phi(0,x)=1$, i.e., that the
transformation is near-identity.  By Corollary~\ref{c.1}, this choice
is also sufficient.  Moreover, it guarantees that the coefficients of
the Maclaurin expansion of $\phi$ with respect to $t$ are bounded
functions of $x$.

Inserting the expression for $\theta$ into \eqref{e.phi} and using
standard trigonometric identities, we conclude that the transformation
operator has the generating function
\begin{equation}
  \label{e.phi2}
  \phi(T^2,X^2)
  = \frac{X}{\arctan X} \, \sqrt{
          \frac{(1 + \tan^2 T)(T^2 - \arctan^2 X)}%
         {\tan^2 T - X^2}} \,,
\end{equation}
which, when expanded and truncated at any finite order in $T$, has
coefficients which are \emph{bounded} operators in space.

Substituting this choice back into \eqref{e.transm} and then into
$L_\modi^\lin$ in transformed variables, equation
\eqref{e.lmod-transformed}, we obtain
\begin{equation}
  L_\modi^\lin
  = \frac2{h^2} \int_{\T} u \, \psi^2(X) \,
      (T^2 - \arctan^2 X) u \, \d x \,,
  \label{e.lmod-transformed-final}
\end{equation}
so that the linear modified Euler--Lagrange equation, computed to all
orders, reads
\begin{equation}
  T^2 u - \arctan^2 X \, u = 0
\end{equation}
or, more explicitly,
\begin{equation}
  \ddot u + A^2 u = 0 \,,
  \label{e.linear-mod-el}
\end{equation}
where $A = 2/h \, \arctan X$ is the pseudodifferential operator
with symbol
\begin{equation}
  a(k) = \frac2h \, \arctan \frac{hk}2 \,.
\end{equation}
The operator $A$ is bounded on $L^2$ with operator norm $O(h^{-1})$.
Note that this expression reproduces the dispersion relation for the
implicit midpoint rule applied to the linear wave equation
\cite[Section~4.2]{BridgesR:2006:NumericalMH} exactly.

\section{High-order modified equations: the nonlinear case}
\label{s.higher-nl}

To construct the modified Lagrangian for the nonlinear system, we
follow the procedure in Section~\ref{s.bea-lagrange} to a fixed order
$2m$.  We do not write out the higher-order terms explicitly, but note
that they potentially contain multiples of all higher-order terms that
appear on the right hand side of a Fa\`a di Bruno expansion of
$V(u(h))$ with respect to $h$, i.e., may contain time derivatives up
to order $2m$.  To eliminate time derivatives of second and higher
order from the modified Lagrangian, we proceed in two steps. In the
first step, we perform the transformation to all orders exactly as
outlined in Section~\ref{s.higher-linear}. In the second step, we
remove these higher-order time derivatives from the variational
principle iteratively, applying an additional near-identity
transformation at each step.  The procedure for doing so is motivated
by the approach in \cite{GottwaldO:2014:SlowDD}, but is more involved
due to the presence of spatial derivatives.

To see how the modified nonlinear term $V_{\modi}(u_h)$ changes under
the transformation \eqref{e.phi2}, we expand $\phi$ with respect to
$t$ and truncate at order $2m$.  This truncated expansion takes the
form
\begin{equation}
  u_h = u + \sum_{i=1}^m h^{2i} \, B_i \, u^{(2i)} 
\end{equation}
where the $B_i$ are self-adjoint spatial operators that implicitly
depend on $h$, but are uniformly bounded on $L^2$ as $h \to 0$.  In
this sense, this series is well-ordered.  Inserting the full $\phi$
into the quadratic terms and the expanded $\phi$ into $V_{\modi}$, we
obtain that $L_\modi \sim L_\modi^\lin + L_\modi^\nl$ with
\begin{gather}
  L_\modi^\lin
  = \frac12 \int_{\T} u \,
      \biggl(
        \partial_{xx} u
        - \frac{X^2}{\arctan^2 X} \ddot u
      \biggr) \, \d x
  \label{e.lmod-transformed-final-2}
\end{gather}
as in \eqref{e.lmod-transformed-final} and, due to the Fa\`a di Bruno
formula and up to terms of $o(h^{2m})$,
\begin{gather}
  L_\modi^\nl
  = V(u) + \sum_{\substack{2 \leq i \leq 2m \\ i \text{ even}}}
      h^{i} \sum_{1 \cdot \alpha_1 + \cdots + i\alpha_i = i}
      V_\alpha(u) \bigl[
        (\partial_t^1 u)^{\otimes \alpha_1}, \dots,
        (\partial_t^i u)^{\otimes \alpha_i}
      \bigr] \,,
  \label{e.lmodinl}
\end{gather}
where $\otimes \beta$ denotes a $\beta$-fold repetition of the
argument and the $V_{\alpha}(u)$ are $\lvert \alpha \rvert$-linear
forms which smoothly depend on $u$ and $h$ and which are uniformly
bounded on $L^2$ as $h \to 0$.  Due to the uniform boundedness of the
$V_\alpha$, we obtain what is effectively a variational principle for
an ODE, albeit with higher-order time derivatives in the nonlinear
contributions.

We now proceed to iteratively eliminate the higher time derivatives
from \eqref{e.lmodinl}. To structure the discussion, let
\begin{gather}
  \V_i = \Span \{
    V_\alpha(u) \bigl[
        (\partial_t^1 u)^{\otimes \alpha_1}, \dots,
        (\partial_t^i u)^{\otimes \alpha_i}
      \bigr] 
    \colon 1 \cdot \alpha_1 + \cdots + i\alpha_i = i
  \} \,,
\end{gather}
with multilinear forms $V_\alpha$ as described above, denote the
vector spaces of functions appearing in the inner sum of
\eqref{e.lmodinl}.  On $\V_i$, we have a natural equivalence of
elements via integration by parts in time or space under the action
integral.  We write $v_1 \sim v_2$ if the functions
$r_1, r_2 \in \V_i$ give the same contribution to the resulting
Euler--Lagrange equations. Note that any $v_i \in \V_i$ is equivalent
to
\begin{gather}
  v_i \sim F(u)[\dot u^{\otimes i}] + \ddot u \, v_{i-2}
  \label{e.sim}
\end{gather}
where $v_{i-2} \in \V_{i-2}$ and $F(u)[\dot u^{\otimes i}] \in \V_i$
depends on $u$ and $\dot u$ only.  Indeed, if any monomial of $v_i$
contains only first or second time derivatives, it is already of the
required form.  If not, there must be a higher-order time derivatives
and we can ``peel off'' time derivatives by repeated integration by
parts until exactly two are left.  When $i=2$, by the same argument,
the second term on the right of \eqref{e.sim} can always taken to be
zero.

Further, let
\begin{gather}
  \W_{k,\ell} = \Span \{
    h^j \, v_i \colon i \leq 2m-k,
    j-i \geq \ell, 2 \leq j \leq 2m, v_i \in \V_i 
  \} \,.
\end{gather}
The first index limits the maximal number of time derivatives
contained in each of the monomials of elements of $\W_{k,\ell}$ and
the second index gives the excess order in $h$ at which time
derivatives occur.  Clearly, $\W_{k,\ell} = \W_{\ell,\ell}$ if $k \leq
\ell$ and
$\W_{k,\ell} \supset \W_{k',\ell'}$ if $k' \geq k$ and $\ell' \geq
\ell$.

We say that two elements from $\W_{k,\ell}$ are equivalent if they are
agree in the sense of equivalence of the $\V_i$, up to terms of
$o(h^{2m})$.  Note that the entire sum on the right of
\eqref{e.lmodinl} is contained in $\W_{0,0}$.  We are now going to
apply a sequence of transformations to the effect that, up to
equivalences, the resulting Lagrangian density depends only on time
derivatives of order one.  We will do so by using \eqref{e.sim} at
each step to split off higher powers of first derivatives from a
remainder, remove second time derivative contributions via a suitably
chosen transformation, and iterate this process until this remainder
is either of class $\W_{2m-2,0}$ or of class $\W_{0,2m-2}$.  In both
of these terminal cases, the monomials making up the remainder
contribution contain at most two time derivatives, which can always be
integrated by parts such that only first time derivatives remain.

To simplify language and notation, we will use the same symbols for
quantities in old and new variables.  Thus, the transformations will
be referred to as ``replacements,'' which is algebraically equivalent.
As we implement these replacements, a corresponding concatenation of
transformations could be constructed.  However, we will not write them
down explicitly and only note that these transformation only contain
uniformly bounded spatial operators.  To begin we note that, up to
equivalences, the following relations hold.
\begin{enumerate}[label={\upshape(\roman*)}]
\item\label{i} If $w \in \W_{k,\ell}$, then
$\ddot w \in \W_{k-2,\ell-2}$.
\item\label{ii} If $w \in \W_{k,\ell}$, then
$\psi^{-2}(X) w \in \W_{k,\ell}$.
\item\label{iii} If $w \in \W_{k,\ell}$, then
$\psi^{-2}(X) \partial_{xx}w \in \W_{k,\ell-2}$.
\item\label{iv} If $v \in \W_{k,\ell}$ and $w \in \W_{k',\ell'}$, then
$vw \in \W_{0,\ell+\ell'}$.
\item\label{v} If $v \in \W_{k,\ell}$, then the replacement
$u \mapsto u + v$ in the expression for $V$ corresponds to the
replacement $V \mapsto V + w$ for some $w \in \W_{0, \ell}$.
\item\label{vi} If $F \in \W_{k,\ell}$ and $v \in \W_{k',\ell'}$ with
$\ell \leq \ell'$, then the replacement $u \mapsto u + v$ in the
expression for $F$ corresponds to the replacement $F \mapsto F + w$
for some $w \in \W_{0, \ell+\ell'}$.
\end{enumerate}

With these provisions, the modified Lagrangian density, after applying
the all-order linear transformation and \eqref{e.sim}, can be written
\begin{align}
  L_{0,0}
  & = \tfrac12 \, u \, u_{xx}
        - \tfrac12 \, u \, \psi^2(X) \ddot u
        + V(u) + R_{0,0}
      \notag \\
  & \sim \tfrac12 \, u \, u_{xx}
        - \tfrac12 \, u \, \psi^2(X) \ddot u
        + V(u) + V_{0,0} + \ddot u \, v_{2,2}
\end{align}
where $R_{0,0} \in \W_{0,0}$ and, by \eqref{e.sim},
$V_{0,0} \in \W_{0,0}$ can be chosen to depend only on $u$ and
$\dot u$ with $v_{2,2} \in \W_{2,2}$.  Now consider the replacement
\begin{equation}
  u \mapsto u + \psi^{-2}(X) v_{2,2} \,.
\end{equation}
Then
\begin{align}
   \tfrac12 \, u \, u_{xx}
   & \mapsto \tfrac12 \, u \, u_{xx}
       + u \, \psi^{-2}(X) \partial_{xx} v_{2,2}
       + \tfrac12 \, \psi^{-2}(X) v_{2,2} \,
         \psi^{-2}(X) \partial_{xx} v_{2,2}
       \notag \\
   & = \tfrac12 \, u \, u_{xx} + w_{2,0} + w_{0,2} 
\end{align}
for some $w_{2,0} \in \W_{2, 0}$ and $w_{0,2} \in \W_{0,2}$.  Next,
\begin{align}
 - \tfrac12 \, u \, \psi^2(X) \ddot u +  \ddot u \, v_{2,2} 
  & \mapsto - \tfrac12 \, u \, \psi^2(X) \ddot u
      + \tilde w_{0,2}
\end{align}
with $\tilde w_{0,2} \in \W_{0,2}$.  Finally, when transforming $V$
and higher-order Lagrangian densities that contain no more than first
derivatives of $u$, then by \ref{v} and \ref{vi}, the additional
remainder term is also of class $\W_{0,2}$.  Thus, altogether,
the first elimination step results in the replacement
\begin{equation}
  L_{0,0} \mapsto L_{2,2} \equiv \tfrac12 \, u \, u_{xx}
        - \tfrac12 \, u \, \psi^2(X) \ddot u
        + V_{2,2} + R_{2,0} + R_{0,2}^2
\end{equation}
with $V_{2,2} \equiv V + V_{0,0}$, and where $R_{2,0}^2 \in \W_{2,0}$
and $R_{0,2} \in \W_{0,2}$.

Now, for the general case, suppose that we are at a stage where the
modified Lagrangian has been transformed into
\begin{align}
  L_{k,\ell}
  & = \tfrac12 \, u \, u_{xx}
        - \tfrac12 \, u \, \psi^2(X) \ddot u
        + V_{k,\ell} + R_{k,\ell-2} + R_{0,\ell}^k
      \notag \\
  & \sim \tfrac12 \, u \, u_{xx}
        - \tfrac12 \, u \, \psi^2(X) \ddot u
        + V_{k,\ell} + F_{k,\ell-2}
        + \ddot u \, v_{k+2,\ell} + R_{0,\ell}^k
\end{align}
where, $R_{k,\ell-2} \in \W_{k,\ell-2}$ and by \eqref{e.sim},
$F_{k,\ell-2} \in \W_{k,\ell-2}$ can be chosen to depend only on $u$
and $\dot u$ with $v_{k+2,\ell} \in \W_{k+2,\ell}$.  Now consider the
replacement
\begin{equation}
  u \mapsto u + \psi^{-2}(X) v_{k+2,\ell}
\end{equation}
with $v_{k+2,\ell} \in \W_{k+2,\ell}$.  Then
\begin{align}
   \tfrac12 \, u \, u_{xx}
   & \mapsto \tfrac12 \, u \, u_{xx}
       + u \, \psi^{-2}(X) \partial_{xx} v_{k+2,\ell}
       + \tfrac12 \, \psi^{-2}(X) v_{k+2,\ell} \,
         \psi^{-2}(X) \partial_{xx} v_{k+2,\ell}
       \notag \\
   & = \tfrac12 \, u \, u_{xx}
       + w_{k+2,\ell-2} + w_{0,2\ell-2} 
\end{align}
for some $w_{k+2,\ell-2} \in \W_{k+2, \ell-2}$ and
$w_{0,2\ell-2} \in \W_{0, 2\ell-2}$.  Next,
\begin{align}
  \ddot u \, v_{k+2,\ell} - \tfrac12 \, u \, \psi^2(X) \ddot u
  & \mapsto - \tfrac12 \, u \, \psi^2(X) \ddot u
      + w_{0,\ell}
\end{align}
with $w_{0,\ell} \in \W_{0,\ell}$.  Finally, when transforming $V$ and
higher-order Lagrangian densities that contain no more than first
derivatives of $u$, then by \ref{v} and \ref{vi}, the additional
remainder term is also of class $\W_{0,\ell}$.  Thus, altogether, one
elimination step results in the replacement
\begin{equation}
  L_{k, \ell} \mapsto L_{k+2,\ell}
  \equiv \tfrac12 \, u \, u_{xx}
        - \tfrac12 \, u \, \psi^2(X) \ddot u
        + V_{k+2,\ell} + R_{k+2,\ell-2}
        + R_{0,\ell}^{k+2}
\end{equation}
with $V_{k+2,\ell} = V_{k,\ell} + F_{k, \ell-2}$.

We iterate this (inner) replacement until $k+2=2m-2$, at which point
we only two time derivatives are left so that we can always choose
$V_{2m,\ell} \sim V_{k+2,\ell} + R_{k+2,\ell-2}$ such that
$V_{2m,\ell}$ depends only on $u$ and $\dot u$, so that
\begin{align}
  L_{2m, \ell}
  & \sim \tfrac12 \, u \, u_{xx}
        - \tfrac12 \, u \, \psi^2(X) \ddot u
        + V_{2m,\ell} + R_{0,\ell}^{2m-2}
        \notag \\
  & \sim \tfrac12 \, u \, u_{xx}
        - \tfrac12 \, u \, \psi^2(X) \ddot u
        + V_{2m,\ell} + F_{0,\ell} + \ddot u \, v_{\ell+2,\ell+2}
\end{align}
where $F_{0,\ell} \in \W_{0,\ell}$ depends only on $u$ and $\dot u$
and $v_{\ell+2,\ell+2} \in \W_{0,\ell+2} = \W_{\ell+2,\ell+2}$.  Now
consider the (outer) replacement
\begin{equation}
  u \mapsto u + \psi^{-2}(X) v_{k+2,\ell+2}
\end{equation}
with $v_{k+2,\ell+2} \in \W_{k+2,\ell+2}$.  Then
\begin{align}
   \tfrac12 \, u \, u_{xx}
   & \mapsto \tfrac12 \, u \, u_{xx}
       + u \, \psi^{-2}(X) \partial_{xx} v_{k+2,\ell+2}
       + \tfrac12 \, \psi^{-2}(X) v_{k+2,\ell+2} \,
         \psi^{-2}(X) \partial_{xx} v_{k+2,\ell+2}
       \notag \\
   & = \tfrac12 \, u \, u_{xx}
       + w_{k+2,\ell} + w_{0,2\ell+2} 
\end{align}
for some $w_{k+2,\ell} \in \W_{k+2, \ell}$ and
$w_{0,2\ell+2} \in \W_{0, 2\ell+2}$.  Next,
\begin{align}
  \ddot u \, v_{k+2,\ell+2} - \tfrac12 \, u \, \psi^2(X) \ddot u
  & \mapsto - \tfrac12 \, u \, \psi^2(X) \ddot u
      + w_{0,\ell+2}
\end{align}
with $w_{0,\ell+2} \in \W_{0,\ell+2}$.  Finally, when transforming $V$
and higher-order Lagrangian densities that contain no more than first
derivatives of $u$, then by \ref{v} and \ref{vi}, the additional
remainder term is also of class $\W_{0,\ell+2}$.  Thus, altogether,
one elimination step results in the replacement
\begin{equation}
  L_{2m,\ell} \mapsto L_{\ell+2,\ell+2}
  \equiv \tfrac12 \, u \, u_{xx}
        - \tfrac12 \, u \, \psi^2(X) \ddot u
        + V_{\ell+2,\ell+2} + R_{\ell+2,\ell} + R_{0,\ell+2}^{\ell+2}
\end{equation}
where $R_{\ell+2,\ell} \in \W(\ell+2,\ell)$ and
$R_{0,\ell+2}^{\ell+2} \in \W_{0,\ell_2}$.  We can then eliminate
$R_{\ell+2,\ell}$ by following the inner replacement loop to its end.

Altogether, we iterate the outer replacement loop until $\ell+2 = 2m-2$,
at which point we can remove all second derivatives and obtain a
final $O(h^{2m})$-modified Lagrangian density of the form
\begin{equation}
  L_{2m,2m} \sim \tfrac12 \, u \, u_{xx}
    - \tfrac12 \, u \, \psi^2(X) \ddot u + V_{2m,2m} 
\end{equation}
where $V_{2m,2m}$ depends only on $u$ and $\dot u$.

\section{Conclusions}
\label{s.conclusions}

Modified equations for backward error analysis of variational
integrators can be systematically constructed using a formal Taylor
expansion of the action integral.  However, a straightforward
variational construction necessitates the use of an extended phase
space.  We have demonstrated for the model case of the implicit
midpoint rule applied to the semilinear wave equation that a carefully
chosen configuration space transformation allows us to eliminate the
dependence of the modified Lagrangian on higher order time
derivatives, thus reducing the phase space, and refitting the modified
equations into the standard framework of Lagrangian mechanics.
Furthermore, such construction yields modified equations whose
dynamics lives on timescales that coincide with the timescales of the
unmodified partial differential equation.  This is clearly not the
case for the modified equations derived on the Hamiltonian side using
the traditional method as can already be seen when discretizing the
linear wave equation.

Even though we looked only at the next-order correction for the
implicit midpoint scheme in detail, we have shown that our
construction generalizes to any order.  We also expect that the
computations shown here generalize in a straightforward way to
semilinear Hamiltonian evolution equations with a linear part
containing a general self-adjoint time-independent operator in place
of $\partial_{xx}$, provided there is a regular Legendre transform.

Our approach was initially developed in the context of Hamiltonian
systems with strong gyroscopic forces \cite{GottwaldOT:2007:LongTA,
Oliver:2006:VariationalAR}; the present work demonstrates that the
strategy is more widely applicable and might point toward an abstract
theory of degenerate variational asymptotics.  We also note that the
flexibility of the approach comes from the fact that the variational
construction modifies the symplectic structure and the Hamiltonian
simultaneously.  In contrast, the traditional construction keeps a
canonical symplectic structure and only modifies the Hamiltonian.

Two major questions remain open.  First, is it possible to show, using
the new variational modified equation, that the implicit midpoint rule
preserves the energy to fourth order?  Note that for the new modified
equations, the truncation remainder is $O(1)$ for $h$ fixed and
$\waven \to \infty$, while it is unbounded in the traditional setting.
Therefore, we expect that we still need some assumptions on the
regularity of the solution---even though numerical simulations
indicate that energy preservation of the implicit midpoint rule is
very good even for non-smooth data---but the regularity conditions are
possibly less stringent than those needed in
\cite{WulffO:2016:ExponentiallyAH}.  Second, is it possible to achieve
exponential asymptotics as in the standard backward error analysis for
ODEs?  To answer this question, it is necessary to describe the
combinatorics of the new asymptotic series, which should be possible,
but is more complicated than the conventional construction.

\section*{Acknowledgments}

We thank Melvin Leok, Haidar Mohamad, Mats Vermeeren and, in
particular, Claudia Wulff for stimulating discussion on the subject of
this paper.  This work was supported in part by DFG grant OL-155/3-2.
This paper is a contribution to project L2 of the Collaborative
Research Center TRR 181 ``Energy Transfers in Atmosphere and Ocean''
funded by the Deutsche Forschungsgemeinschaft (DFG, German Research
Foundation) under project number 274762653.

\bibliographystyle{siam}
\bibliography{bea}

\begin{thebibliography}{10}

\bibitem{Arnold:1988:GeometricalMT}
{\scshape V.~I. Arnol'd}, {\em Geometrical methods in the theory of ordinary
  differential equations}, Springer-Verlag, New York, second~ed., 1988.

\bibitem{BenettiinG:1994:HamiltonianIN}
{\scshape G.~Benettin and A.~Giorgilli}, {\em On the {H}amiltonian
  interpolation of near-to-the-identity symplectic mappings with application to
  symplectic integration algorithms}, J. Statist. Phys., 74 (1994),
  pp.~1117--1143.

\bibitem{BrennerCT:1982:SingleSM}
{\scshape P.~Brenner, M.~Crouzeix, and V.~Thom\'{e}e}, {\em Single-step methods
  for inhomogeneous linear differential equations in {B}anach space}, RAIRO
  Anal. Num\'{e}r., 16 (1982), pp.~5--26.

\bibitem{BridgesR:2006:NumericalMH}
{\scshape T.~J. Bridges and S.~Reich}, {\em Numerical methods for {H}amiltonian
  {PDE}s}, J. Phys. A, 39 (2006), pp.~5287--5320.

\bibitem{Cano:2006:ConservedQH}
{\scshape B.~Cano}, {\em Conserved quantities of some {H}amiltonian wave
  equations after full discretization}, Numer. Math., 103 (2006), pp.~197--223.

\bibitem{CohenHL:2008:ConservationEM}
{\scshape D.~Cohen, E.~Hairer, and C.~Lubich}, {\em Conservation of energy,
  momentum and actions in numerical discretizations of non-linear wave
  equations}, Numer. Math., 110 (2008), pp.~113--143.

\bibitem{CotterR:2006:SemigeostrophicPM}
{\scshape C.~J. Cotter and S.~Reich}, {\em Semigeostrophic particle motion and
  exponentially accurate normal forms}, Multiscale Model. Simul., 5 (2006),
  pp.~476--496.

\bibitem{DebusscheF:2009:ModifiedES}
{\scshape A.~Debussche and E.~Faou}, {\em Modified energy for split-step
  methods applied to the linear {S}chr\"{o}dinger equation}, SIAM J. Numer.
  Anal., 47 (2009), pp.~3705--3719.

\bibitem{DujardinF:2007:NormalFL}
{\scshape G.~Dujardin and E.~Faou}, {\em Normal form and long time analysis of
  splitting schemes for the linear {S}chr\"{o}dinger equation with small
  potential}, Numer. Math., 108 (2007), pp.~223--262.

\bibitem{FaouGP:2010:BirkhoffFD}
{\scshape E.~Faou, B.~Gr\'{e}bert, and E.~Paturel}, {\em Birkhoff normal form
  for splitting methods applied to semilinear {H}amiltonian {PDE}s. {I}.
  {F}inite-dimensional discretization}, Numer. Math., 114 (2010), pp.~429--458.

\bibitem{FaouGP:2010:BirkhoffAS}
\leavevmode\vrule height 2pt depth -1.6pt width 23pt, {\em Birkhoff normal form
  for splitting methods applied to semilinear {H}amiltonian {PDE}s. {II}.
  {A}bstract splitting}, Numer. Math., 114 (2010), pp.~459--490.

\bibitem{GaucklerL:2010:SplittingIN}
{\scshape L.~Gauckler and C.~Lubich}, {\em Splitting integrators for nonlinear
  {S}chr\"{o}dinger equations over long times}, Found. Comput. Math., 10
  (2010), pp.~275--302.

\bibitem{GottwaldOT:2007:LongTA}
{\scshape G.~Gottwald, M.~Oliver, and N.~Tecu}, {\em Long-time accuracy for
  approximate slow manifolds in a finite-dimensional model of balance}, J.
  Nonlinear Sci., 17 (2007), pp.~283--307.

\bibitem{GottwaldO:2014:SlowDD}
{\scshape G.~A. Gottwald and M.~Oliver}, {\em Slow dynamics via degenerate
  variational asymptotics}, Proc. R. Soc. Lond. Ser. A Math. Phys. Eng. Sci.,
  470 (2014), p.~20140460.

\bibitem{HairerL:1997:LifeSB}
{\scshape E.~Hairer and C.~Lubich}, {\em The life-span of backward error
  analysis for numerical integrators}, Numer. Math., 76 (1997), pp.~441--462.

\bibitem{HairerLW:2006:GeometricNI}
{\scshape E.~Hairer, C.~Lubich, and G.~Wanner}, {\em Geometric numerical
  integration}, Springer-Verlag, Berlin, second~ed., 2006.

\bibitem{Henry:1981:GeometricTS}
{\scshape D.~Henry}, {\em Geometric theory of semilinear parabolic equations},
  Springer-Verlag, Berlin-New York, 1981.

\bibitem{IslasS:2005:BackwardEA}
{\scshape A.~L. Islas and C.~M. Schober}, {\em Backward error analysis for
  multisymplectic discretizations of {H}amiltonian {PDE}s}, Math. Comput.
  Simulation, 69 (2005), pp.~290--303.

\bibitem{LeimkuhlerR:2004:SimulatingHD}
{\scshape B.~Leimkuhler and S.~Reich}, {\em Simulating {H}amiltonian dynamics},
  Cambridge University Press, Cambridge, 2004.

\bibitem{LeokS:2012:GeneralTC}
{\scshape M.~Leok and T.~Shingel}, {\em General techniques for constructing
  variational integrators}, Front. Math. China, 7 (2012), pp.~273--303.

\bibitem{LeokZ:2011:DiscreteHV}
{\scshape M.~Leok and J.~Zhang}, {\em Discrete {H}amiltonian variational
  integrators}, IMA J. Numer. Anal., 31 (2011), pp.~1497--1532.

\bibitem{MarsdenW:2001:DiscreteMV}
{\scshape J.~E. Marsden and M.~West}, {\em Discrete mechanics and variational
  integrators}, Acta Numer., 10 (2001), pp.~357--514.

\bibitem{McLachlanO:2022:BackwardEA}
{\scshape R.~I. McLachlan and C.~Offen}, {\em Backward error analysis for
  variational discretisations of {PDE}s}, J. Geom. Mech., 14 (2022),
  pp.~447--471.

\bibitem{MooreR:2003:BackwardEA}
{\scshape B.~Moore and S.~Reich}, {\em Backward error analysis for
  multi-symplectic integration methods}, Numer. Math., 95 (2003), pp.~625--652.

\bibitem{Neishtadt:1984:SeparationMS}
{\scshape A.~I. Ne{\u\i}shtadt}, {\em The separation of motions in systems with
  rapidly rotating phase}, J. Appl. Math. Mech., 48 (1984), pp.~133--139.

\bibitem{Oliver:2006:VariationalAR}
{\scshape M.~Oliver}, {\em Variational asymptotics for rotating shallow water
  near geostrophy: a transformational approach}, J. Fluid Mech., 551 (2006),
  pp.~197--234.

\bibitem{OliverV:2011:HamiltonianFM}
{\scshape M.~Oliver and S.~Vasylkevych}, {\em Hamiltonian formalism for models
  of rotating shallow water in semigeostrophic scaling}, Discrete Contin. Dyn.
  Syst., 31 (2011), pp.~827--846.

\bibitem{OliverWW:2004:ApproximateMC}
{\scshape M.~Oliver, M.~West, and C.~Wulff}, {\em Approximate momentum
  conservation for spatial semidiscretizations of semilinear wave equations},
  Numer. Math., 97 (2004), pp.~493--535.

\bibitem{OliverW:2012:AstableRK}
{\scshape M.~Oliver and C.~Wulff}, {\em {$A$}-stable {R}unge-{K}utta methods
  for semilinear evolution equations}, J. Funct. Anal., 263 (2012),
  pp.~1981--2023.

\bibitem{OliverW:2014:StabilityGT}
\leavevmode\vrule height 2pt depth -1.6pt width 23pt, {\em Stability under
  {G}alerkin truncation of {A}-stable {R}unge-{K}utta discretizations in time},
  Proc. Roy. Soc. Edinburgh Sect. A, 144 (2014), pp.~603--636.

\bibitem{Pazy:1983:SemigroupsLO}
{\scshape A.~Pazy}, {\em Semigroups of linear operators and applications to
  partial differential equations}, Springer-Verlag, New York, 1983.

\bibitem{Reich:1999:BackwardEA}
{\scshape S.~Reich}, {\em Backward error analysis for numerical integrators},
  SIAM J. Numer. Anal., 36 (1999), pp.~1549--1570.

\bibitem{Vermeeren:2017:ModifiedEV}
{\scshape M.~Vermeeren}, {\em Modified equations for variational integrators},
  Numer. Math., 137 (2017), pp.~1001--1037.

\bibitem{Vermeeren:2019:ModifiedEV}
\leavevmode\vrule height 2pt depth -1.6pt width 23pt, {\em Modified equations
  for variational integrators applied to {L}agrangians linear in velocities},
  J. Geom. Mech., 11 (2019), pp.~1--22.

\bibitem{Veselov:1988:IntegrableSD}
{\scshape A.~P. Veselov}, {\em Integrable systems with discrete time, and
  difference operators}, Funktsional. Anal. i Prilozhen., 22 (1988), pp.~1--13,
  96.

\bibitem{WendlandtM:1997:MechanicalID}
{\scshape J.~M. Wendlandt and J.~E. Marsden}, {\em Mechanical integrators
  derived from a discrete variational principle}, Phys. D, 106 (1997),
  pp.~223--246.

\bibitem{WulffO:2016:ExponentiallyAH}
{\scshape C.~Wulff and M.~Oliver}, {\em Exponentially accurate {H}amiltonian
  embeddings of symplectic {A}-stable {R}unge--{K}utta methods for
  {H}amiltonian semilinear evolution equations}, Proc. Roy. Soc. Edinburgh
  Sect. A, 146 (2016), pp.~1265--1301.

\end{thebibliography}

\end{document}